\renewcommand{\div}{\operatorname{div}}
\newcommand{\spn}{\operatorname{span}}
\newcommand{\eucl}{\operatorname{eucl}}
\newcommand{\ip}[2]{\ensuremath{\langle #1 , #2 \rangle}}
\begin{document}
\title{Viscosity Solutions in Martinet Spaces}

\author{Thomas Bieske, Frederic Bowen}

\address{Thomas Bieske \newline
Department of Mathematics and Statistics,
University of South Florida,
Tampa, FL  33620-5700, USA}
\email{tbieske@usf.edu}

\address{Frederic Bowen \newline
College of Arts and Sciences,
University of South Florida,
Tampa, FL  33620-5700, USA}
\email{bif@usf.edu}

\subjclass{Primary: 53C17, 35D40, 35H20}
\keywords{Sub-Riemannian geometry, viscosity solutions, infinite Laplacian} 

\begin{abstract}
In this paper, we establish the properties of viscosity solutions in Martinet spaces, which lack both the algebraic group law of Carnot groups and the triangular vector fields of Grushin-type spaces. We then prove the uniqueness of viscosity solutions to strictly monotone elliptic PDEs and to the infinite Laplace equation. 
\end{abstract}

\maketitle
\numberwithin{equation}{section}
\newtheorem{theorem}{Theorem}[section]
\newtheorem{corollary}[theorem]{Corollary}
\newtheorem{lemma}[theorem]{Lemma}
\newtheorem{proposition}[theorem]{Proposition}
\newtheorem{remark}[theorem]{Remark}
\newtheorem*{quest}{Main Question}
\newtheorem*{obs}{Observation}
\newtheorem{definition}{Definition}
\allowdisplaybreaks
%%%
\section{introduction}
Properties of viscosity solutions in sub-Riemannian spaces have been a topic of recent study. In particular, the existence-uniqueness of viscosity solutions to the $p$-Laplace equation for $1<p\leq \infty$ in Carnot groups was established by exploiting the (non-abelian) algebraic group law \cite{B:MP} and the triangular structure of the vector fields in Grushin-type spaces enabled existence-uniqueness of viscosity solutions to the infinite Laplace and $\infty(x)$-Laplace equations to be proved. \cite{B:GS, B:IX}.

In \cite{CGPV}, existence of viscosity solutions to the infinite Laplace equation in general sub-Riemannian spaces was proved, leading to the question of uniqueness especially in spaces that lack an algebraic group law or lack the vector field structure of Grushin-type spaces. In this paper, we establish uniqueness of viscosity solutions to the infinite Laplace equation in such a space, namely, Martinet spaces.  

This paper is part of an undergraduate research project of the second author under the guidance of the first author. The second author wishes to thank the University of South Florida for the opportunities and support.  

\section{Martinet Space}
We begin with $\mathbb{R}^3$ and coordinates $(x_1,x_2,x_3)$. Consider the vector fields

\begin{eqnarray*}
X_1& = & \frac{\partial}{\partial x_1}\\
\textmd{and\ \ } X_2 & = & \frac{\partial}{\partial x_2}+f(x_1)\frac{\partial}{\partial x_3}
\end{eqnarray*}
where $f:\mathbb{R}\to\mathbb{R}$ is real-analytic. That is, it has a nontrivial Taylor series over $\mathbb{R}$.
The space $\mathbb{R}^3$ with these vector fields, denoted $\mathfrak{m}$, is  the tangent space to the Martinet space $\mathbb{M}$ with coordinates $(x_1,x_2,x_3)$.
Now, $\mathbb{M}$ need not be a group because

\[[X_1,X_2]=f'(x_1)\frac{\partial}{\partial x_3}=\vec{0}\]
when $f'(x_1)=0$ and 

\[[X_1,X_2]=f'(x_1)\frac{\partial}{\partial x_3}\neq\vec{0}\]
when $f'(x_1)\neq0$, meaning the Lie algebra grading can depend on the point $(x_1,x_2,x_3)$, such as the case when $f'(x_1)\not\equiv 0$ but $f'(x_1)$ has zeros. Note that when $f(x_1)=x_1$, we have a presentation of the Heisenberg algebra. We shall denote the vector $[X_1,X_2]$ by $X_3$. 

Because we will assume the zeros of $f(x_1)$ have finite order, there is an $r_{x_1}\in\mathbb{N}$ such that the Lie bracket iteration of minimal length $r_{x_1}$ satisfies the condition $[X_1,[X_1,[...,[X_1,X_2]...]]\neq0$. Then by \cite[Theorem 7.34]{BR:SRG},

\begin{equation}\label{distest}
d(p,q)\approx|x-x_0|+|y-y_0|+|z-z_0|^{(r_{x_1}+1)^{-1}}.
\end{equation}

Even though $\mathbb{M}$ need not be a group, it is a metric space whose natural metric is the Carnot-Carath\'{e}odory distance, which is defined for the points $p$ and $q$ as follows:

\[d(p,q)=\inf_\Gamma\int_0^1\|\gamma'(t)\|dt,\]
where $\|\cdot\|$ is a norm that makes $\{X_1,X_2\}$ an orthonormal set and where $\Gamma$ is the set of all curves $\gamma$ joining p and q with $\gamma(0)=p$, 
$\gamma(1)=q$, and $\gamma'(t)\in\spn\{X_1,X_2\}$.\\
By the Chow-Rashevskii theorem (see e.g.\cite{BR:SRG}), any two points can be connected by such a curve, which means $d(p,q)$ is an honest metric. Using this metric, we can define a Carnot-Carath\'{e}odory ball of radius $r$ centered at a point $p_0$ by

\[B=B(p_0,r)=\{p\in\mathbb{M}:d(p,p_0)<r\};\]
similarly, we shall denote a bounded domain in $\mathbb{M}$ by $\Omega$.
%%%
\section{Calculus}
Given a smooth function $u$ on $\mathbb{M}$, we define the horizontal gradient of $u$, denoted $\nabla_0u$ by 
\[\nabla_0u(p)=(X_1u(p),X_2u(p)),\]
the semi-horizontal gradient of $u$, denoted $\nabla_1u$ by 
\[\nabla_1u(p)=(X_1u(p),X_2u(p), X_3u(p)),\]
and the symmetrized second-order (horizontal) derivative matrix, denoted $(D^2u(p))^\star$ by
\[((D^2u(p))^\star)_{ij}=\frac{1}{2}(X_iX_ju(p)+X_jX_iu(p))\]
for $i,j=1,2$.

\begin{definition}
The function $u:\mathbb{M}\to\mathbb{R}$ is said to be $C_{sub}^1$ if $X_1u$ and $X_2u$ are continuous. Similarly, the function $u$ is $C_{sub}^2$ if $X_3u(p)$ is continuous and $X_iX_ju(p)$ is continuous for all $i,j=1,2$.
\end{definition}

It should also be noted that, for any open set $\mathcal{O}\subset\mathbb{M}$, the function $u$ is in the horizontal Sobolev space $W^{1,q}(\mathcal{O})$ if $u, X_1u,X_2u$ are in $L^q(\mathcal{O})$. Replacing $L^q(\mathcal{O})$ by $L^q_{loc}(\mathcal{O})$, the space $W^{1,q}_{loc}(\mathcal{O})$ is defined similarly. The space $W^{1,q}_0(\mathcal{O})$ is the closure in $W^{1,q}(\mathcal{O})$ of smooth functions with compact support. Locally Lipschitz functions are those functions $u$ such that 

\[\|\nabla_0u\|_{L^\infty_{loc}}<\infty.\]
\section{Martinet Jets and Viscosity Solutions}
Using the derivatives of the previous section, the class of equations we consider are given by

\begin{equation}\label{geneq}
F(p,u(p),\nabla_1u(p),(D^2u(p))^\star)=0
\end{equation}
where the continuous function 

\[F:\mathbb{M}\times\mathbb{R}\times  \mathfrak{m}\times S^2\to\mathbb{R}\]
satisfies

\[F(p,r,\eta,X)\leq F(p,s,\eta, Y).\]
when $r\leq s$ and $Y\leq X$. (That is, $F$ is proper \cite{CIL:UGTVS}). Recall that $S^2$ is the set of $2\times 2$ real symmetric matrices. An example of this type of equation is the quasilinear horizontal q-Laplacian

\[\div(\|\nabla_0u\|^{q-2}\nabla_0u)=X_1(\|\nabla_0u\|^{q-2}X_1u)+X_2(\|\nabla_0u\|^{q-2}X_2u)\] 
for $2\leq q<\infty$. Formally taking the limit as $q\to\infty$ yields the horizontal infinite Laplacian

\[\Delta_{\infty}f=\sum_{i,j=1}^2X_ifX_jfX_iX_jf=\langle(D^2f)^\star\nabla_0f,\nabla_0f\rangle.\]

We now wish to define solutions to the equation

\[F(p,u(p),\nabla_1u(p),(D^2u(p))^\star)=0\]
in the viscosity sense. In order to do so, we must define the subelliptic jets. (For a thorough discussion of jets, the interested reader is directed to \cite{CIL:UGTVS}.) Given an open set $\mathcal{O}\subset\mathbb{M}$, a point $p_0\in\mathcal{O}$, and a function $u: \mathcal{O}\to\mathbb{R}$, we consider classes of test functions which ``touch” the function $u$ at the point $p_0$. Namely, we have the so-called ``touching above”
functions
\[\mathcal{TA}(u,p_0)=\{\varphi\in C_{sub}^2(\Omega):0 = \varphi(p_0)-u(p_0) <\varphi(p)-u(p)\:\text{near}\: p_0\};\]
we have also the ``touching below” functions at $p_0$ defined by
\[\mathcal{TB}(u,p_0)=\{\varphi\in C_{sub}^2(\Omega): 0 = u(p_0)-\psi(p_0) <u(p)-\psi(p)\:\text{near}\: p_0\}.\]
We then use these test functions to define Martinet jets.
\begin{definition}
Given $u:\Omega\subset \mathbb{M}\to\mathbb{R}$, we define the second-order superjet for $u$ by
\[J^{2,+} u(p_0)=\{(\nabla_1\varphi(p_0),(D^2\varphi)^\star(p_0))\in\mathfrak{m}\times S^2:\varphi\in\mathcal{TA}(u,p_0)\}\]
and the second-order subjet for $u$ by 
\[J^{2,-} u(p_0)=\{(\nabla_1\psi(p_0),(D^2\psi)^\star(p_0))\in\mathfrak{m}\times S^2:\psi\in\mathcal{TB}(u,p_0)\}\]
\end{definition}
It is easy to see from the definition that $J^{2,-}u(p_0)=-J^{2,+}(-u)(p_0)$. In addition, we say that the ordered pair
$(\eta,X)\in \mathfrak{m}\times S^2$ belongs to the closure of the superjet, written $(\eta,X)\in \overline{J}^{2,+} u(p_0)$, if there exists a sequence of points 
$\{p_k\}\subset\Omega$ and jet entries $(\eta_k,X_k)\in J^{2,+}u(p_k)$ so that
\[(p_k,u(p_k),\eta_k,X_k)\to(p_0,u(p_0),\eta,X).\]

\begin{definition}
Let $\mathcal{O}$ be a open set in $\mathbb{M}$ and let $u:\mathcal{O}\to\mathbb{R}$. If $u$ is upper semicontinuous and 
\[F(p,u(p),\eta,X)\leq0\quad\text{for all}\quad p\in\mathcal{O},\quad\text{for all}\quad (\eta,X)\in \overline{J}^{2,+}_{\mathcal{O}}u(p)\]
then u is a viscosity subsolution of $F(p,u(p),\nabla_1u(p),(D^2u(p))^\star)=0.$
If $u$ is lower semicontinuous and 
\[F(p,u(p),\eta,X)\geq0\quad\text{for all}\quad p\in\mathcal{O},\quad\text{for all}\quad (\eta,X)\in \overline{J}^{2,-}_{\mathcal{O}}u(p)\]
then u is a viscosity supersolution of $F(p,u(p),\nabla_1u(p),(D^2u(p))^\star)=0.$\\
The function $u$ is a viscosity solution if it is both a viscosity subsolution and a viscosity supersolution. 
\end{definition}

In order to use the machinery of \cite{CIL:UGTVS} to prove comparison principles, a relationship between Euclidean and Martinet jets must be established. This is accomplished through the following lemma, whose proof is a special case of \cite[Corollary 3.2]{B:MP} and omitted. 

\begin{lemma}[Twisting Lemma]\label{twlemma}
Let $O\subset \mathbb{M}$ be open, let $u: O\to\mathbb{R}$, and let $p_0=(x_1^0,x_2^0,x_3^0) \in O$. Suppose that $(\eta, X)\in J^{2,+}_{\textmd{eucl}} u(p_0)$ with $\eta=(\eta_1,\eta_2,\eta_3)$ and $X=\{X_{ij}\}$ a $3\times 3$ symmetric matrix. Then 
\begin{equation}\label{twisty}
(B(p_0)\cdot \eta, A(p_0)\cdot X\cdot A^T(p_0)+\mathcal{T}(\eta,p_0))\in J^{2,+} u(p_0)
\end{equation}
where $A(p_0)$ is the $2\times3$ matrix of Martinet vector coefficients defined by
\[A(p_0)=\begin{pmatrix}1&0&0\\0&1&f(x_1^0)\end{pmatrix}_,\]
the $3\times 3$ matrix $B(p_0)$ is given by 
\[B(p_0)=A(p_0)\oplus(0,0,f'(x_1^0)=\begin{pmatrix}1&0&0\\0&1&f(x_1^0)\\0&0&f'(x_1^0) \end{pmatrix}_,\]
and the $2\times2$ twisting matrix $\mathcal{T}(\eta,p_0)$ is given by
\[\mathcal{T}(\eta,p_0)=\frac{\eta_3}{2}\begin{pmatrix}0&f'(x_1^0)\\
f'(x_1^0)&0\end{pmatrix}_.\]
More specifically,
\[B(p_0)\cdot\eta=\begin{pmatrix}
\eta_1\\
\eta_2+f(x_1^0)\eta_3\\
f'(x_1^0)\eta_3
\end{pmatrix}\]
and 
\begin{eqnarray*}
\lefteqn{A(p_0)\cdot X\cdot A^T(p_0)+\mathcal{T}(\eta,p_0)=} & & \\
& & \begin{pmatrix}x_{11}&x_{12}+f(x_1^0)x_{13}+\frac{1}{2}f'(x_1^0)\eta_3\\
\mbox{} & \mbox{} \\
x_{12}+f(x_1^0)x_{13}+\frac{1}{2}f'(x_1^0)\eta_3&x_{22}+2f(x_1^0)x_{23}+f(x_1^0)^2x_{33}\end{pmatrix}_.
\end{eqnarray*}
\end{lemma}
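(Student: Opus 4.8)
The statement is a transfer result: a Euclidean second-order superjet of $u$ at $p_0$ produces a Martinet superjet via the explicit matrices $A(p_0)$, $B(p_0)$, and the twisting correction $\mathcal{T}(\eta,p_0)$. Since the authors note this is a special case of \cite[Corollary 3.2]{B:MP}, the plan is to mimic that argument in the present setting. First I would fix $(\eta,X)\in J^{2,+}_{\eucl}u(p_0)$ and select a Euclidean test function $\varphi\in C^2$ with $\nabla\varphi(p_0)=\eta$, $D^2\varphi(p_0)=X$, touching $u$ from above at $p_0$; the existence of such a $\varphi$ is the standard characterization of Euclidean jets. The goal is then to show that this same $\varphi$, viewed as a function on $\mathbb{M}$, lies in $\mathcal{TA}(u,p_0)$ — which is immediate since the touching condition $0=\varphi(p_0)-u(p_0)<\varphi(p)-u(p)$ near $p_0$ is a pointwise statement independent of which derivatives we compute — and to compute $\nabla_1\varphi(p_0)$ and $(D^2\varphi)^\star(p_0)$ in terms of $\eta$ and $X$.

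The core of the proof is therefore a chain-rule bookkeeping computation. For the semi-horizontal gradient: $X_1\varphi=\partial_{x_1}\varphi$, $X_2\varphi=\partial_{x_2}\varphi+f(x_1)\partial_{x_3}\varphi$, and $X_3\varphi=[X_1,X_2]\varphi=f'(x_1)\partial_{x_3}\varphi$; evaluating at $p_0$ and writing $\eta=(\eta_1,\eta_2,\eta_3)=(\partial_{x_1}\varphi,\partial_{x_2}\varphi,\partial_{x_3}\varphi)(p_0)$ yields exactly $B(p_0)\cdot\eta$. For the symmetrized Hessian, I would compute each $X_iX_j\varphi$ for $i,j\in\{1,2\}$: the operator $X_2X_2$ produces the second-order terms $\partial_{x_2}^2\varphi+2f(x_1)\partial_{x_2}\partial_{x_3}\varphi+f(x_1)^2\partial_{x_3}^2\varphi$ plus a \emph{first}-order term $f'(x_1)f(x_1)\partial_{x_3}\varphi$ coming from differentiating the coefficient $f(x_1)$ — wait, one must be careful: $X_1$ hits $f(x_1)$ inside $X_2$, so the asymmetry between $X_1X_2\varphi$ and $X_2X_1\varphi$ is precisely $[X_1,X_2]\varphi=f'(x_1)\partial_{x_3}\varphi=\eta_3 f'(x_1^0)$ at $p_0$; symmetrizing splits this evenly, giving the $\tfrac12 f'(x_1^0)\eta_3$ entries, i.e.\ the twisting matrix $\mathcal{T}(\eta,p_0)$. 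Collecting the purely second-order contributions gives $A(p_0)\cdot X\cdot A^T(p_0)$, and the stated formula for $(D^2\varphi)^\star(p_0)$ follows. One subtlety worth addressing: the diagonal $(2,2)$-entry of $X_2X_2\varphi$ also picks up $f'(x_1^0)f(x_1^0)\eta_3$ from differentiating the coefficient; I should check whether this term is absorbed into the claimed expression or vanishes — it does \emph{not} appear in the stated matrix, so either the convention for $(D^2\varphi)^\star$ in this paper discards first-order remainders in a way consistent with \cite{B:MP}, or this is a genuine point needing the observation that such first-order terms are already accounted for in the $\nabla_1\varphi$ slot and do not re-enter the Hessian slot. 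I would reconcile this by appealing directly to the cited corollary rather than re-deriving, as the authors do.

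The expected main obstacle is not conceptual but organizational: keeping the first-order "coefficient-derivative" terms separated cleanly from the genuine second-order terms, and confirming that the symmetrization convention $((D^2\varphi)^\star)_{ij}=\tfrac12(X_iX_j\varphi+X_jX_i\varphi)$ interacts with the noncommutativity of $X_1,X_2$ exactly so as to generate $\mathcal{T}(\eta,p_0)$ and nothing more. Since the paper explicitly defers to \cite[Corollary 3.2]{B:MP}, the honest plan is: (i) recall the Euclidean-jet test-function characterization; (ii) observe the touching property is coordinate-free so $\varphi$ transfers verbatim; (iii) perform the chain-rule computation of $\nabla_1\varphi(p_0)$ and $(D^2\varphi)^\star(p_0)$, carefully tracking where $f$ and $f'$ enter; (iv) identify the resulting expressions with $B(p_0)\cdot\eta$ and $A(p_0)XA^T(p_0)+\mathcal{T}(\eta,p_0)$; and (v) for the $+$-jet closure statement and the passage from a single test function to the jet, cite \cite{B:MP} and \cite{CIL:UGTVS}. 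The analogous statement for subjets follows from the identity $J^{2,-}u(p_0)=-J^{2,+}(-u)(p_0)$ already noted in the text.
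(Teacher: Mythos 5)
Your overall strategy is the right one: a Euclidean jet element $(\eta,X)$ is realized by a $C^2$ test function $\varphi$ touching $u$ from above, the touching condition is coordinate-free so $\varphi\in\mathcal{TA}(u,p_0)$ verbatim, and then one reads off $\nabla_1\varphi(p_0)$ and $(D^2\varphi)^\star(p_0)$ by a chain-rule computation. That is exactly what a proof here amounts to (the paper defers to \cite[Corollary 3.2]{B:MP} and does not re-derive it).

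However, you leave hanging a ``subtlety'' that is actually a computational mistake, and the hedge you offer for it is wrong. You claim that $X_2X_2\varphi$ produces a first-order term $f'(x_1)f(x_1)\partial_{x_3}\varphi$ from differentiating the coefficient $f(x_1)$, and then speculate that the stated matrix must be ``discarding first-order remainders.'' Neither is the case. The vector field $X_2=\partial_{x_2}+f(x_1)\partial_{x_3}$ contains no $\partial_{x_1}$, so applying $X_2$ to $X_2\varphi$ never differentiates the coefficient $f(x_1)$. One has, exactly,
\[
X_2X_2\varphi=\partial_{x_2}^2\varphi+2f(x_1)\partial_{x_2}\partial_{x_3}\varphi+f(x_1)^2\partial_{x_3}^2\varphi,
\]
with no first-order remainder, and this is precisely the $(2,2)$-entry of $A(p_0)XA^T(p_0)$ at $p_0$. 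The only first-order contribution in the entire symmetrized Hessian comes from $X_1X_2\varphi$, where $X_1=\partial_{x_1}$ does hit the coefficient $f(x_1)$ sitting inside $X_2$, producing $f'(x_1)\partial_{x_3}\varphi$; since $X_2X_1\varphi$ has no such term, symmetrization contributes $\tfrac12 f'(x_1^0)\eta_3$ to each off-diagonal entry, which is exactly $\mathcal{T}(\eta,p_0)$. There is no discarding convention and nothing to reconcile: the computation closes on its own. You should delete the speculative hedge and replace it with the direct observation that $X_2$ does not differentiate $f$, since as written the proof contains a step you yourself flag as unverified and then resolve incorrectly.

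One smaller point: your step (v) mentions ``the $+$-jet closure statement,'' but the lemma as stated concerns $J^{2,+}$, not $\overline{J}^{2,+}$; the passage to the closure is a separate (routine) limiting argument and is not part of what is being asserted here, so it should not be folded into this proof.
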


%%%
\section{Maximum Principle}
We begin by stating a lemma analogous to \cite[Lemma 3.1]{CIL:UGTVS}. The proof is similar and thus is omitted. 

\begin{remark}
We wish to emphasize that in Lemma \ref{mmp} below,  the penalty function $\varphi(p,q)$ not only differs from the Euclidean function in \cite[Lemma 3.1]{CIL:UGTVS}, but also differs from the Grushin-type spaces penalty function of \cite[Lemma 4.1]{B:GS}. It will naturally differ from the penalty function of the Carnot Group Maximum Principle \cite[Lemma 3.6]{B:MP} because that penalty function incorporates the algebraic group law of the Carnot group.

\end{remark}
\begin{lemma}\label{mmp}
Let $u$ be an upper semicontinuous function in $\Omega$ and $v$ be a lower semicontinuous function in $\Omega$. For $\tau>0$ and for points $p$ and $q$ given by $p=(x_1,x_2,x_3)$ and $q=(y_1,y_2,y_3)$, let the function $\varphi(p,q)$ be defined by
\[\varphi(p,q)\equiv\frac{1}{2}(x_1-y_1)^2+\frac{1}{4}(x_2-y_2)^4+\frac{1}{4}(x_3-y_3)^4\]
and let the function $M_\tau$ be defined by
\[M_\tau=\sup_{\overline{\Omega}\times\overline{\Omega}}(u(p)-v(q)-\tau\varphi(p,q)).\]
Let $p_\tau=(x_1^\tau,x_2^\tau,x_3^\tau)$ and $q_\tau=(y_1^\tau,y_2^\tau,y_3^\tau)$ be such that 
\[\lim_{\tau\to\infty}(M_\tau-(u(p_\tau)-v(q_\tau)-\tau\varphi(p_\tau,q_\tau)))=0\]
Then, 
\begin{equation}\lim_{\tau\to\infty}\tau\varphi(p_\tau,q_\tau)=0\end{equation}
and
\begin{equation}\lim_{\tau\to\infty}M_\tau=u(p^*)-v(p^*)=\sup_{\overline\Omega}(u(p)-v(p))\end{equation}
whenever $p^*$ is a limit point of $p_\tau$ as $\tau\to\infty$.
\end{lemma}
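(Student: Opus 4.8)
The plan is to follow the standard argument from \cite[Lemma 3.1]{CIL:UGTVS}, adapted to the non-quadratic penalty function $\varphi(p,q)=\frac{1}{2}(x_1-y_1)^2+\frac{1}{4}(x_2-y_2)^4+\frac{1}{4}(x_3-y_3)^4$. First I would record the basic compactness facts: since $\overline{\Omega}\times\overline{\Omega}$ is compact and $u-v-\tau\varphi$ is upper semicontinuous, the supremum $M_\tau$ is attained, and the chosen $(p_\tau,q_\tau)$ are approximate maximizers with error tending to $0$. I would also note that $M_\tau$ is nonincreasing in $\tau$ (larger $\tau$ penalizes more) and bounded below by $\sup_{\overline\Omega}(u(p)-v(p))$ (take $p=q$), so $M:=\lim_{\tau\to\infty}M_\tau$ exists and $M\geq\sup_{\overline\Omega}(u-v)$. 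Set $\delta_\tau=\tau\varphi(p_\tau,q_\tau)$.

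The heart of the proof is the claim $\lim_{\tau\to\infty}\delta_\tau=0$. I would argue as in Crandall--Ishii--Lions: from $M_\tau - (u(p_\tau)-v(q_\tau)-\tau\varphi(p_\tau,q_\tau)) \to 0$ and the monotonicity of $M_\tau$, for $\tau' \geq \tau/2$ one compares $M_{\tau/2} \geq u(p_{\tau})-v(q_{\tau}) - \frac{\tau}{2}\varphi(p_\tau,q_\tau)$ against $M_\tau \leq u(p_\tau)-v(q_\tau)-\tau\varphi(p_\tau,q_\tau) + o(1)$, which yields $\frac{\tau}{2}\varphi(p_\tau,q_\tau) \leq M_{\tau/2}-M_\tau + o(1)$. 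Since $M_{\tau/2}-M_\tau\to 0$ as $\tau\to\infty$ (both converge to $M$), the right-hand side tends to $0$, hence $\delta_\tau = \tau\varphi(p_\tau,q_\tau)\to 0$; in particular $\varphi(p_\tau,q_\tau)\to 0$, so by the explicit form of $\varphi$, $|x_1^\tau-y_1^\tau|,|x_2^\tau-y_2^\tau|,|x_3^\tau-y_3^\tau|\to 0$, i.e. $d(p_\tau,q_\tau)\to 0$ via the estimate \eqref{distest} (the fourth powers versus squares are immaterial here since we only need the differences to vanish). This step uses nothing special about the power $4$; it is purely the doubling-the-variable trick, so the Martinet geometry does not obstruct it.

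Finally I would pass to a limit point $p^*$ of $p_\tau$: choose a subsequence $\tau_k\to\infty$ with $p_{\tau_k}\to p^*$, and then $q_{\tau_k}\to p^*$ as well since $\varphi(p_{\tau_k},q_{\tau_k})\to 0$ forces the coordinates to agree in the limit. Upper semicontinuity of $u$ and lower semicontinuity of $v$ give $\limsup_k u(p_{\tau_k}) \leq u(p^*)$ and $\liminf_k v(q_{\tau_k}) \geq v(p^*)$, hence
\[
M = \lim_k M_{\tau_k} = \lim_k\bigl(u(p_{\tau_k})-v(q_{\tau_k})-\tau_k\varphi(p_{\tau_k},q_{\tau_k})\bigr) \leq u(p^*)-v(p^*) \leq \sup_{\overline\Omega}(u-v).
\]
Combined with the reverse inequality $M\geq\sup_{\overline\Omega}(u-v)$ from the first paragraph, all quantities coincide, giving $\lim_{\tau\to\infty}M_\tau = u(p^*)-v(p^*)=\sup_{\overline\Omega}(u-v)$.

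I do not anticipate a serious obstacle: the argument is the classical one and the only place the penalty function enters is through the elementary implication $\varphi(p_\tau,q_\tau)\to 0 \Rightarrow$ the coordinate differences vanish, which is immediate for this $\varphi$. The reason the statement is nonetheless worth isolating is that this particular $\varphi$ (quadratic in $x_1$, quartic in $x_2,x_3$) is the one that will later survive the Twisting Lemma \ref{twlemma} and make the second-order comparison estimates close; its suitability for the maximum-principle bookkeeping is what Lemma \ref{mmp} certifies.
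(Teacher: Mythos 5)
Your argument is the standard Crandall--Ishii--Lions Lemma 3.1 doubling-of-variables argument, correctly adapted to the quartic penalty $\varphi$, and it is exactly what the paper means when it says the proof is "similar to \cite[Lemma 3.1]{CIL:UGTVS} and thus is omitted." The key steps — monotonicity of $M_\tau$, the $\tau/2$-versus-$\tau$ comparison to force $\tau\varphi(p_\tau,q_\tau)\to 0$, and semicontinuity at a limit point — all go through verbatim since only $\varphi\geq 0$, $\varphi(p,p)=0$, and coercivity of $\varphi$ in the coordinate differences are used, all of which hold here.
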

By the Theorem of Sums \cite[Theorem 3.2]{CIL:UGTVS}  there are matrices $X,Y\in S^3$ so that 
\begin{eqnarray*}
(\tau \nabla_p\varphi(p_\tau,q_\tau), X)  \in  \overline{J}^{2,+}_{\eucl}u(p_\tau) \ \ 
\textmd{and}\ \ (-\tau\nabla_q\phi(p_\tau,q_\tau), Y) \in  \overline{J}^{2,-}_{\eucl}v(q_\tau)
\end{eqnarray*}
where $\nabla_p$ and $\nabla_q$ denote the Euclidean gradient with respect to the point $p$ and $q$, respectively. Invoking the Twisting Lemma (Lemma \ref{twlemma}), there are vectors $\Upsilon_{p_\tau}, \Upsilon_{q_\tau}\in \mathfrak{m}$ and matrices $\mathcal{X}_\tau, \mathcal{Y}_\tau \in S^2$ so that  
\begin{eqnarray*}
(\tau\Upsilon_{p_\tau},\mathcal{X}_\tau) \in \overline{J}^{2,+}u(p_\tau) \  \textmd{and}\ \ 
(\tau\Upsilon_{q_\tau},\mathcal{Y}_\tau) \in \overline{J}^{2,-}v(q_\tau).
\end{eqnarray*}

%In addition, for all vectors $\xi, \epsilon \in \spn\{X_1,X_2\}$
%\begin{eqnarray*}
%\ip{\mathcal{X}_\tau\xi}{\xi}-\ip{\mathcal{Y}_\tau\epsilon}{\epsilon} & \leq & \tau\ip{\mathcal{C}(\xi\oplus\epsilon)}{(\xi\oplus\epsilon)}\\
%&&\mbox{}+\ip{\varepsilon\mathcal{M}^2(\mathbb{A}(p_\tau)^T\xi\oplus\mathbb{A}(q_\tau)^T\epsilon)}{(\mathbb{A}(p_\tau)^T\xi\oplus
% \mathbb{A}(q_\tau)^T\epsilon)}
%\end{eqnarray*}
%where the $4 \times 4$ symmetric matrix $\mathcal{C}$ is given in block form by 
%$$\begin{pmatrix}
%(D^2_{p}\phi)^\star(p_\tau,q_\tau) & W \\
%W^T & (D^2_{q}\phi)^\star(p_\tau,q_\tau)
%\end{pmatrix}_.$$
%and the $2 \times 2$ matrix $W$ has entries 
%\begin{eqnarray*}
%W_{ij}=X_i(p)X_j(q)\phi(p_\tau,q_\tau).
%\end{eqnarray*}
The properties of these jet elements are given in the following lemma. 
\begin{lemma}\label{5.2}
The vectors $\Upsilon_{p_\tau}$ and $\Upsilon_{q_\tau}$ satisfy
\begin{equation}\|\Upsilon_{p_\tau}\|^2-\|\Upsilon_{q_\tau}\|^2=O(\varphi(p_\tau,q_\tau)^2).\end{equation}
In addition, with the usual ordering, the matrix $\mathcal{X}^\tau$ is smaller than the matrix $\mathcal{Y}^\tau$ with an error term. In particular we have $\mathcal{X}^\tau\leq\mathcal{Y}^\tau+\mathcal{R}^\tau$, where $\mathcal{R}^\tau\rightarrow0$ as $\tau\rightarrow\infty$
\end{lemma}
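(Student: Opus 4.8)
The plan is to let the Twisting Lemma convert the Euclidean jet data supplied by the Theorem of Sums into the Martinet data, and then to exploit the weighted homogeneity of $\varphi$. Abbreviate $a=x_1^\tau-y_1^\tau$, $b=x_2^\tau-y_2^\tau$, $c=x_3^\tau-y_3^\tau$. I would first record three facts: (i) $\nabla_p\varphi(p_\tau,q_\tau)=(a,b^3,c^3)$, $-\nabla_q\varphi(p_\tau,q_\tau)=(a,b^3,c^3)$, and $D^2\varphi(p_\tau,q_\tau)=\left(\begin{smallmatrix}D&-D\\-D&D\end{smallmatrix}\right)$ with $D=\operatorname{diag}(1,3b^2,3c^2)$; (ii) by Lemma \ref{mmp}, $\tau a^2\to 0$, $\tau b^4\to 0$ and $\tau c^4\to 0$ as $\tau\to\infty$; (iii) since $f$ is real-analytic, $f$, $f'$ and $f''$ are Lipschitz on the projection of $\overline\Omega$ onto the $x_1$-axis, with a common constant $L$, and $|f|\le M$ there. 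Feeding $(\tau(a,b^3,c^3),X)\in\overline{J}^{2,+}_{\eucl}u(p_\tau)$ and $(\tau(a,b^3,c^3),Y)\in\overline{J}^{2,-}_{\eucl}v(q_\tau)$ through the Twisting Lemma (Lemma \ref{twlemma}; for the subjet, via $J^{2,-}u=-J^{2,+}(-u)$) gives, from the explicit formulas for $B$, $A$, $\mathcal{T}$, the vectors $\Upsilon_{p_\tau}=\bigl(a,\;b^3+f(x_1^\tau)c^3,\;f'(x_1^\tau)c^3\bigr)$ and $\Upsilon_{q_\tau}=\bigl(a,\;b^3+f(y_1^\tau)c^3,\;f'(y_1^\tau)c^3\bigr)$, together with $\mathcal{X}_\tau=A(p_\tau)XA^{T}(p_\tau)+\mathcal{T}(\tau(a,b^3,c^3),p_\tau)$ and $\mathcal{Y}_\tau=A(q_\tau)YA^{T}(q_\tau)+\mathcal{T}(\tau(a,b^3,c^3),q_\tau)$.

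For the first assertion I would subtract the squared norms. Since $\Upsilon_{p_\tau}$ and $\Upsilon_{q_\tau}$ agree in the first slot and differ in the others only through the substitutions $f(x_1^\tau)\mapsto f(y_1^\tau)$ and $f'(x_1^\tau)\mapsto f'(y_1^\tau)$, every term surviving in $\|\Upsilon_{p_\tau}\|^2-\|\Upsilon_{q_\tau}\|^2$ carries a factor $f(x_1^\tau)-f(y_1^\tau)$ or $f'(x_1^\tau)-f'(y_1^\tau)$, which by (iii) is $O(|a|)$, the remaining factors being bounded multiples of $|b|^3|c|^3$ or $|c|^6$. This gives $\bigl|\,\|\Upsilon_{p_\tau}\|^2-\|\Upsilon_{q_\tau}\|^2\,\bigr|\le C\bigl(|a|\,|b|^3|c|^3+|a|\,|c|^6\bigr)$. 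Finally $|a|\le(2\varphi)^{1/2}$, $|b|\le(4\varphi)^{1/4}$, $|c|\le(4\varphi)^{1/4}$, and since $\tfrac12+\tfrac34+\tfrac34=2$ and $\tfrac12+\tfrac64=2$, each monomial is $\le C'\varphi^2$. This is precisely why $\varphi$ uses the fourth powers of $b$ and $c$: it makes $\varphi$ ``$2$-homogeneous'' for the weighting $|a|\sim\varphi^{1/2}$, $|b|,|c|\sim\varphi^{1/4}$ under which the twisted gradient is continuous.

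For the matrix assertion, the matrix inequality of the Theorem of Sums, with its free parameter set equal to $\tau^{-1}$, reads $\left(\begin{smallmatrix}X&0\\0&-Y\end{smallmatrix}\right)\le \tau D^2\varphi+\tfrac1\tau(\tau D^2\varphi)^2=\left(\begin{smallmatrix}G&-G\\-G&G\end{smallmatrix}\right)$, where $G=\tau(D+2D^2)$ is diagonal (using $\left(\begin{smallmatrix}D&-D\\-D&D\end{smallmatrix}\right)^2=2\left(\begin{smallmatrix}D^2&-D^2\\-D^2&D^2\end{smallmatrix}\right)$). I would then test this $6\times6$ inequality against the coupled vectors $\bigl(A^{T}(p_\tau)\xi,\;A^{T}(q_\tau)\xi\bigr)$ for $\xi=(\xi_1,\xi_2)\in\mathbb{R}^2$. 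The left side becomes $\ip{A(p_\tau)XA^{T}(p_\tau)\xi}{\xi}-\ip{A(q_\tau)YA^{T}(q_\tau)\xi}{\xi}$, and the right side collapses — because $A^{T}(p_\tau)\xi-A^{T}(q_\tau)\xi=\bigl(0,0,(f(x_1^\tau)-f(y_1^\tau))\xi_2\bigr)$ and $G$ is diagonal — to $G_{33}\bigl(f(x_1^\tau)-f(y_1^\tau)\bigr)^2\xi_2^2$ with $G_{33}=3\tau c^2+18\tau c^4$. Using $|f(x_1^\tau)-f(y_1^\tau)|\le L|a|$ together with $\tau c^2a^2\le\tfrac12(\tau a^2)a^2+\tfrac12\tau c^4\to 0$ and $\tau c^4a^2\le(\tau c^4)a^2\to 0$, this yields $A(p_\tau)XA^{T}(p_\tau)\le A(q_\tau)YA^{T}(q_\tau)+\mathcal{E}_\tau I$ with $\mathcal{E}_\tau\to 0$. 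Reinstating the twisting matrices, $\mathcal{X}_\tau-\mathcal{Y}_\tau$ differs from $A(p_\tau)XA^{T}(p_\tau)-A(q_\tau)YA^{T}(q_\tau)$ by $\tfrac{\tau c^3}{2}$ times the symmetric off-diagonal matrix with entry $f'(x_1^\tau)-f'(y_1^\tau)$, whose norm is $\le\tfrac{L}{2}\,\tau|c|^3|a|=\tfrac{L}{2}(\tau c^4)^{3/4}(\tau a^2)^{1/2}\tau^{-1/4}\to 0$. Hence $\mathcal{X}_\tau\le\mathcal{Y}_\tau+\mathcal{R}_\tau$ with $\mathcal{R}_\tau\to 0$.

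The step I expect to be the main obstacle is the matrix estimate. A crude operator-norm bound on $A(p_\tau)XA^{T}(p_\tau)-A(q_\tau)YA^{T}(q_\tau)$ will not suffice, since $\|X\|$ and $\|Y\|$ are only $O(\tau)$ while $|a|$ is only $o(\tau^{-1/2})$, producing an unhelpful $O(\tau^{1/2})$. One is forced to test the coupled inequality, so that the loss is exactly $G_{33}(f(x_1^\tau)-f(y_1^\tau))^2$; this is small only because the quartic terms in $\varphi$ force $\tau c^2a^2\to 0$, and the very same estimate is what tames the rogue factor $\tau c^3$ hiding in the twisting-matrix difference. Getting these scalings to balance — and checking that they are not spoiled by the quadratic correction in the Theorem of Sums — is the delicate point, and it is exactly where the Martinet penalty function has to depart from the Carnot and Grushin ones.
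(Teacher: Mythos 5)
Your proof is correct and takes essentially the same approach as the paper: both express the twisted jet data explicitly via the Twisting Lemma (so $\Upsilon_{p_\tau}=(a,\,b^3+f(x_1^\tau)c^3,\,f'(x_1^\tau)c^3)$, etc.), exploit the weighted homogeneity $|a|\sim\varphi^{1/2}$, $|b|,|c|\sim\varphi^{1/4}$ for the vector estimate, and for the matrix estimate split $\mathcal{X}_\tau-\mathcal{Y}_\tau$ into the Theorem-of-Sums block plus the twisting-matrix block, testing the $6\times 6$ inequality against coupled vectors with $\epsilon=\kappa$ and using $\tau\varphi(p_\tau,q_\tau)\to 0$. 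The only real difference is one of bookkeeping: you carry the quadratic correction $\tfrac{1}{\tau}(\tau D^2\varphi)^2$ from the Theorem of Sums explicitly (obtaining $G=\tau(D+2D^2)$) and bound $\|\mathcal{T}_p-\mathcal{T}_q\|$ as a genuine operator-norm estimate, whereas the paper writes $C=D^2\varphi$ and absorbs the correction and the twisting term into its $\sim$ estimates; both routes give the same scalings and hence the same conclusion.
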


\begin{proof}
A straightforward computation shows 
\begin{eqnarray*}
\|\Upsilon_{p_\tau}\|^2-\|\Upsilon_{q_\tau}\|^2 & = & 2(x_2-y_2)^3(x_3-y_3)^3(f(x_1)-f(y_1)) \\ 
& & \mbox{}+(x_3-y_3)^6(f(x_1)^2-f(y_1)^2)+(f'(x_1)-f'(y_1))(x_3-y_3)^6
\end{eqnarray*}
Now, \begin{equation*}
(x_2-y_2) \sim  \varphi^\frac{1}{4},\
(x_3-y_3)  \sim  \varphi^\frac{1}{4},\ \textmd{and\ }
f(x_1)-f(y_1)  \sim  (x_1-y_1)\sim\varphi^\frac{1}{2}.
\end{equation*}
We then have 
\[\|\Upsilon_{p_\tau}\|^2-\|\Upsilon_{q_\tau}\|^2\sim (\varphi^\frac{8}{4}+\varphi^\frac{8}{4}+\varphi^\frac{8}{4}).\]
The vector result follows. 

Next, we focus on the matrix difference estimate.  Using the Twisting Lemma (Lemma \ref{twlemma}), we have (writing $A_p$ for $A(p_\tau)$, $\mathcal{T}_p$ for $\mathcal{T}(\Upsilon_{p_\tau},p_\tau))$, etc),

\begin{eqnarray*}
\langle \mathcal{X}^\tau\varepsilon,\varepsilon\rangle-\langle \mathcal{Y}^\tau\kappa,\kappa\rangle
&=& \langle(A_pX^\tau A^T_p+\mathcal{T}_p)\epsilon,\epsilon\rangle-\langle(A_qY^\tau A^T_q+\mathcal{T}_q)\kappa,\kappa\rangle\\
&=&\langle X^\tau A^T_p\epsilon,A^T_p\epsilon\rangle-\langle Y^\tau A^T_q\kappa,A^T_q\kappa\rangle+\langle \mathcal{T}_p\epsilon,\epsilon\rangle-\langle \mathcal{T}_q\kappa,\kappa\rangle\\
& &\leq\tau\left\langle C
\begin{pmatrix} A^T_p\epsilon\\A^T_q\kappa \end{pmatrix},
\begin{pmatrix} A^T_p\epsilon\\A^T_q\kappa \end{pmatrix}\right\rangle +\langle \mathcal{T}_p\epsilon,\epsilon\rangle-\langle \mathcal{T}_q\kappa,\kappa\rangle \\
 & \stackrel{\textmd{def}}{=} & \tau\mathcal{V}+\mathcal{W}. 
\end{eqnarray*}
Here the matrix $C$ is given by 
\begin{eqnarray*}
\lefteqn{C=(D^2\varphi)_{\eucl}=}& & \\
& \begin{pmatrix}
1& 0 & 0 &-1 & 0 & 0 \\
0& 3(x_2-y_2)^2 & 0 & 0 & -3(x_2-y_2)^2 & 0 \\
0& 0 & 3(x_3-y_3)^2 & 0 & 0 & 3(x_3-y_3)^2 \\
-1& 0 & 0 & 1 & 0 & 0 \\
0 & -3(x_2-y_2)^2 & 0 & 0& 3(x_2-y_2)^2 & 0 \\
 0 & 0 & 3(x_3-y_3)^2 & 0& 0 & 3(x_3-y_3)^2 
\end{pmatrix}_. 
\end{eqnarray*}
Let us first simplify the inner product given by the term $\tau\mathcal{V}$. 
Unravelling the matrix multiplication, we have 
\begin{equation*}
C \begin{pmatrix} A^T_p\epsilon\\A^T_q\kappa \end{pmatrix} = 
\begin{pmatrix}
\epsilon_1-\kappa_1\\
 3(x_2^\tau-y_2^\tau)^2 (\epsilon_2-\kappa_2)\\
 3(x_3^\tau-y_3^\tau)^2(f(x_1^\tau)\epsilon_2-f(y_1^\tau)\kappa_2)\\
 -\epsilon_1+\kappa_1\\
 3(x_2^\tau-y_2^\tau)^2(-\epsilon_2+\kappa_2)\\ 
 3(x_3^\tau-y_3^\tau)^2(-f(x_1^\tau)\epsilon_2+f(y_1^\tau)\kappa_2)
 \end{pmatrix}
\end{equation*}
so that $\tau \mathcal{V}$ is given by 
\begin{equation*}
\tau\mathcal{V} = \tau\bigg((\epsilon_1-\kappa_1)^2+ 3(x_2^\tau-y_2^\tau)^2(\epsilon_2-\kappa_2)^2+  3(x_3^\tau-y_3^\tau)^2(f(x_1^\tau)\epsilon_2-f(y_1^\tau)\kappa_2)^2\bigg)
\end{equation*}
We note that when $\epsilon=\kappa$, we have 
\begin{eqnarray}\label{Vlabel}
\tau \mathcal{V} & \sim & \tau\bigg((x_3^\tau-y_3^\tau)^2(f(x_1^\tau)-f(y_1^\tau))^2\bigg) \sim \tau \varphi^{\frac{1}{2}} \times \varphi
\end{eqnarray}
We now focus on the $\mathcal{W}$ term. We have 
\begin{equation*}
\mathcal{W}=\langle \mathcal{T}_p\epsilon,\epsilon\rangle-\langle \mathcal{T}_q\kappa,\kappa\rangle =
\tau (x_3^\tau-y_3^\tau)^3\bigg(f'(x_1^\tau)\epsilon_1\epsilon_2-f'(y_1^\tau)\kappa_1\kappa_2\bigg).
\end{equation*}
When $\epsilon=\kappa$, we then have 
\begin{equation}\label{Wlabel}
\mathcal{W}\sim \tau\varphi^{\frac{3}{4}}\varphi^{\frac{1}{2}}
\end{equation}
The matrix relation then follows from Equations \eqref{Vlabel} and \eqref{Wlabel}. 
\end{proof}

This lemma is the key to proving comparison principles. The first comparison principle involves strictly monotone elliptic equations. Such equations satisfy the following properties:

\begin{eqnarray*}
\sigma(r-s)&\leq & F(p,r,\eta,X)-F(p,s,\eta,X),\\
|F(p,r,\eta,X)-F(q,r,\eta,X)|&\leq & w_1(d_C(p,q)),\\
|F(p,r,\eta,X)-F(p,r,\eta,Y)|&\leq & w_2(\|Y-X\|),\\
|F(p,r,\eta,X)-F(p,r,\nu,X)|&\leq & w_3(\|\eta\|-\|\nu\|),
\end{eqnarray*}
where the constant $\sigma>0$ and the functions $w_i:[0,\infty]\to[0,\infty]$ satisfy $w_i(0^+)=0$ for $i=1,2,3$. The appropriate comparison principle is given next.

\begin{theorem}
%%%Prove using Bieske paper
Let $F$ satisfy the four properties just listed. Let $u$ be an upper semicontinuous subsolution and $v$ a lower semicontinuous supersolution to 
\[F(p,g(p),\nabla_1g(p), (D^2g(p))^\star)=0\]
in a domain $\Omega$ such that
\[\limsup_{q\rightarrow p}u(q)\leq\liminf_{q\rightarrow p}v(q)\]
when $p\in\partial\Omega$, where both sides are neither $\infty$ nor $-\infty$ simultaneously. Then
\[u(p)\leq v(p)\]
for all $p\in\Omega$.
\end{theorem}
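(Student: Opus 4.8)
The plan is to argue by contradiction, following the standard doubling-of-variables scheme from \cite{CIL:UGTVS} but driven by the specific penalty function $\varphi$ of Lemma \ref{mmp} and the jet estimates of Lemma \ref{5.2}. Suppose $\sup_{\Omega}(u-v) = \theta > 0$. Using the boundary hypothesis, one first localizes: the supremum is attained at an interior point, and for $\tau$ large the maximizers $p_\tau, q_\tau$ of $M_\tau$ stay in a compact subset of $\Omega$, with $p^\ast$ (a limit point of both $p_\tau$ and $q_\tau$, by Lemma \ref{mmp}) lying in $\Omega$ and satisfying $u(p^\ast)-v(p^\ast)=\theta$. From Lemma \ref{mmp} we also have $\tau\varphi(p_\tau,q_\tau)\to 0$ and $d_C(p_\tau,q_\tau)\to 0$ (the latter via the distance estimate \eqref{distest} comparing $d_C$ to Euclidean quantities controlled by $\varphi$).

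Next I would feed the jets into the equation. By the Theorem of Sums and the Twisting Lemma there are $(\tau\Upsilon_{p_\tau},\mathcal{X}^\tau)\in\overline{J}^{2,+}u(p_\tau)$ and $(\tau\Upsilon_{q_\tau},\mathcal{Y}^\tau)\in\overline{J}^{2,-}v(q_\tau)$. The subsolution and supersolution conditions give
\[
F\big(p_\tau, u(p_\tau), \tau\Upsilon_{p_\tau}, \mathcal{X}^\tau\big)\le 0 \le F\big(q_\tau, v(q_\tau), \tau\Upsilon_{q_\tau}, \mathcal{Y}^\tau\big).
\]
Subtracting and inserting intermediate terms, I would write the difference $F(q_\tau,v(q_\tau),\tau\Upsilon_{q_\tau},\mathcal{Y}^\tau) - F(p_\tau,u(p_\tau),\tau\Upsilon_{p_\tau},\mathcal{X}^\tau)$ as a telescoping sum: a term controlled by strict monotonicity in the $r$-slot, plus three error terms controlled by $w_1$ (spatial continuity, estimated via $d_C(p_\tau,q_\tau)\to 0$), $w_2$ (matrix continuity), and $w_3$ (gradient continuity). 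The monotonicity term is bounded below by $\sigma(u(p_\tau)-v(q_\tau))$, which converges to $\sigma\theta > 0$. For the $w_2$ term I would use $\mathcal{X}^\tau \le \mathcal{Y}^\tau + \mathcal{R}^\tau$ from Lemma \ref{5.2}, together with monotonicity/ellipticity of $F$ in the matrix argument and $\|\mathcal{R}^\tau\|\to 0$. For the $w_3$ term I would use $\big|\|\Upsilon_{p_\tau}\|-\|\Upsilon_{q_\tau}\|\big|\to 0$, which follows from the estimate $\|\Upsilon_{p_\tau}\|^2-\|\Upsilon_{q_\tau}\|^2 = O(\varphi(p_\tau,q_\tau)^2)$ in Lemma \ref{5.2} once one also notes $\|\Upsilon_{p_\tau}\|$ stays bounded (again from $\tau\varphi\to 0$). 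Assembling, the right side tends to $0$ while the left side is $\ge 0$, yet the monotonicity lower bound forces $\sigma\theta \le 0$, contradicting $\theta > 0$; hence $u \le v$ throughout $\Omega$.

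One technical point I would handle carefully: the jets $\tau\Upsilon_{p_\tau}$ appear with the large factor $\tau$, so it is not immediate that the gradient arguments passed to $F$ are bounded. The resolution is that $\tau\varphi(p_\tau,q_\tau)\to 0$ while $\varphi$ itself behaves like a fourth power in the $x_2,x_3$ differences and a square in $x_1$; a direct computation (as in the proof of Lemma \ref{5.2}) shows $\tau\Upsilon_{p_\tau}$ and $\tau\Upsilon_{q_\tau}$ are uniformly bounded and have vanishing difference in norm. I would also need the closure-jet versions of the sub/supersolution inequalities, which is exactly how the definitions are stated, so no extra approximation is required. A second subtlety is that $w_1$ is a modulus in the Carnot–Carathéodory distance, not the Euclidean one; here \eqref{distest} is what converts the Euclidean smallness of $p_\tau - q_\tau$ (coming from $\tau\varphi \to 0$) into $d_C(p_\tau, q_\tau) \to 0$.

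\textbf{Main obstacle.} The crux is controlling the matrix term: one must pass from the Euclidean Theorem-of-Sums inequality for $X,Y \in S^3$ to a genuine inequality $\mathcal{X}^\tau \le \mathcal{Y}^\tau + \mathcal{R}^\tau$ for the twisted $2\times 2$ Martinet matrices with a \emph{vanishing} remainder. This is precisely the content of Lemma \ref{5.2}, whose proof shows the twisting contributions $\mathcal{T}_p, \mathcal{T}_q$ and the off-diagonal mismatch in $A_p X^\tau A_p^T$ versus $A_q Y^\tau A_q^T$ are all of size $\tau\varphi^{1+\varepsilon}\to 0$ because $f$ is analytic and $f(x_1^\tau)-f(y_1^\tau)$, $f'(x_1^\tau)-f'(y_1^\tau)$ are controlled by $|x_1^\tau - y_1^\tau| \sim \varphi^{1/2}$. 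Once that lemma is in hand the comparison argument is the routine Crandall–Ishii–Lions machinery; the real work — the nonstandard penalty function and the point-dependent vector-field coefficients — has already been absorbed into Lemmas \ref{mmp} and \ref{5.2}.
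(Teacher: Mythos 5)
Your proposal is correct and follows essentially the same route as the paper's proof: assume $\sup_\Omega(u-v)>0$, invoke the Martinet Maximum Principle (Lemma \ref{mmp}) and the jet estimates of Lemma \ref{5.2}, telescope the difference of $F$-values so that the strict-monotonicity lower bound $\sigma(u(p_\tau)-v(q_\tau))$ is dominated by the three moduli $w_1(d_C(p_\tau,q_\tau))$, $w_2(\|\mathcal{R}^\tau\|)$, $w_3(\tau\big|\|\Upsilon_{p_\tau}\|-\|\Upsilon_{q_\tau}\|\big|)$, all of which vanish as $\tau\to\infty$, yielding a contradiction.
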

\begin{proof}
Suppose $\sup_\Omega (u-v)>0.$ Using the Martinet maximum principle (Lemma \ref{mmp}), we obtain
\begin{align*}
\sigma (u(p_\tau)-v(q_\tau))&\leq F(p_\tau,u(p_\tau),\tau\Upsilon_{p_\tau}, \mathcal{X}^\tau)-F(p_\tau,v(q_\tau),\tau\Upsilon_{p_\tau}, \mathcal{X}^\tau)\\
&=F(p_\tau,u(p_\tau),\tau\Upsilon_{p_\tau}, \mathcal{X}^\tau)-F(q_\tau,v(q_\tau),\tau\Upsilon_{q_\tau}, \mathcal{Y}^\tau)\\
&+F(q_\tau,v(q_\tau),\tau\Upsilon_{q_\tau}, \mathcal{Y}^\tau)-F(p_\tau,v(q_\tau),\tau\Upsilon_{q_\tau}, \mathcal{Y}^\tau)\\
&+F(p_\tau,v(q_\tau),\tau\Upsilon_{q_\tau}, \mathcal{Y}^\tau)-F(p_\tau,v(q_\tau),\tau\Upsilon_{p_\tau}, \mathcal{Y}^\tau)\\
&+F(p_\tau,v(q_\tau),\tau\Upsilon_{p_\tau}, \mathcal{Y}^\tau)-F(p_\tau,v(q_\tau),\tau\Upsilon_{p_\tau}, \mathcal{X}^\tau).
\end{align*}
The first term is negative because $u$ is a subsolution and $v$ is a supersolution. Using (Lemma \ref{5.2}) yields
\[0<\sigma (u(p_\tau)-v(q_\tau))\leq w_1(d_C(p_\tau,q_\tau))+w_2(||R_\tau||)+w_3(\tau\big|||\Upsilon_{q_\tau}||-||\Upsilon_{p_\tau}||\big|),\]
which goes to 0 as $\tau$ approaches $\infty $.
\end{proof}
\section{Viscosity Solutions to the Infinite Laplace Equation}
In this section, we look to address the existence-uniqueness of viscosity solutions to the $\infty$-Laplace equation, also called as infinite harmonic functions. By \cite{CGPV}, infinite harmonic functions exist, so we will focus on uniqueness. Unfortunately, the delicate nature of the estimates used in the uniqueness proof do not allow us to use the standard machinery, even when considering the results of the previous section. However, we do have a helpful tool at our disposal, namely, the Iterated Maximum Principle \cite{B:GS}. 
\subsection{The Iterated Maximum Principle}
We will need a specific version of the Iterated Maximum Principle to match our Martinet environment: 
\begin{lemma}[The Iterated Maximum Principle]\label{IMP}\cite{B:GS}
Let $\Omega\subset\mathbb{M}$ be a domain and let  $u$ be an upper semicontinuous function in $\Omega$ and $v$ a lower semicontinuous function in $\Omega$.  Assume that there exists some $p_0\in\Omega$ so that
\[u(p_0)-v(p_0)>0.\]
Let $\vec{\tau}=(\tau_1,\tau_2,\tau_3)\in\mathbb{R}^3$ have positive coordinates and, for each pair of points in $\mathbb{M}$ $p=(x_1,x_2,x_3),q=(y_1,y_2,y_3)$ define the functions 
\begin{align*}
\varphi_{\tau_1,\tau_2,\tau_3}(p,q)&:=\frac{1}{2}\sum_{k=1}^3\tau_k(x_k-y_k)^2\\
\varphi_{\tau_2,\tau_3}(p,q)&:=\frac{1}{2}\sum_{k=2}^3\tau_k(x_k-y_k)^2\\
\varphi_{\tau_3}(p,q)&:=\frac{1}{2}\tau_3(x_3-y_3)^2.
\end{align*}
Appealing to the compactness of $\overline{\Omega}$ and to the upper semicontinuity, we may also define 
\begin{align*} 
M_{\tau_1,\tau_2,\tau_3}&:=\sup_{\overline{\Omega}\times\overline{\Omega}}\{u(p)-v(q)-\varphi_{\tau_1,\tau_2,\tau_3}(p,q)\}\\
&=u(p_{\tau_1,\tau_2,\tau_3})-v(q_{\tau_1,\tau_2,\tau_3})-\varphi_{\tau_1,\tau_2,\tau_3}(p_{\tau_1,\tau_2,\tau_3},q_{\tau_1,\tau_2,\tau_3})\\
M_{\tau_2,\tau_3}&:=\sup_{\overline{\Omega}\times\overline{\Omega}}\{u(p)-v(q)-\varphi_{\tau_2,\tau_3}(p,q):x_1=y_1\}\\
&=u(p_{\tau_2,\tau_3})-v(q_{\tau_2,\tau_3})-\varphi_{\tau_2,\tau_3}(p_{\tau_2,\tau_3},q_{\tau_2,\tau_3})\\
M_{\tau_3}&:=\sup_{\overline{\Omega}\times\overline{\Omega}}\{u(p)-v(q)-\varphi_{\tau_3}(p,q):x_2=y_2\}\\
&=u(p_{\tau_3})-v(q_{\tau_3})-\varphi_{\tau_3}(p_{\tau_3},q_{\tau_3}).
\end{align*}
Then,
\[\lim_{\tau_3\to\infty}\lim_{\tau_2\to\infty}\lim_{\tau_1\to\infty}M_{\tau_1,\tau_2,\tau_3}=u(p_0)-v(p_0)\]
and
\[\lim_{\tau_3\to\infty}\lim_{\tau_2\to\infty}\lim_{\tau_1\to\infty}\varphi_{\tau_1,\tau_2,\tau_3}(p_{\tau_1,\tau_2,\tau_3},q_{\tau_1,\tau_2,\tau_3})=0.\]
Additionally, the first $l$ coordinates of $p_{\tau_{l+1},...,\tau_3}$ and $q_{\tau_{l+1},...,\tau_3}$ are identical. That is, 
\[x_k^{\tau_{l+1},...,\tau_3}=y_k^{\tau_{l+1},...,\tau_3}\:,\:k=1,...l.\]
\end{lemma}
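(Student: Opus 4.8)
The plan is to iterate the classical one-step maximum principle lemma \cite[Lemma 3.1]{CIL:UGTVS} --- whose Martinet analogue is recorded above as Lemma \ref{mmp} --- once for each coordinate, peeling off $x_1$, then $x_2$, then $x_3$. The substance lies in verifying that the coordinate-by-coordinate quadratic penalties fit that framework and in correctly threading the three nested limits (and the already-matched coordinates) together. Throughout I would use that $\overline\Omega$ is compact and that $u$ and $-v$ are upper semicontinuous on it, so that every supremum below is finite and attained. The conclusion as stated forces $p_0$ to be a point at which $u-v$ attains its maximum over $\overline\Omega$, so I would take it to be one (such a point exists by upper semicontinuity and compactness); the hypothesis then says this maximum, $m:=u(p_0)-v(p_0)$, is positive. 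I abbreviate $\vec\tau=(\tau_1,\tau_2,\tau_3)$, $M_{\vec\tau}=M_{\tau_1,\tau_2,\tau_3}$, $p_{\vec\tau}=p_{\tau_1,\tau_2,\tau_3}$, and so on.

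\textbf{Stage one.} With $\tau_2,\tau_3>0$ fixed, I would apply \cite[Lemma 3.1]{CIL:UGTVS} to the upper semicontinuous function $(p,q)\mapsto u(p)-v(q)-\varphi_{\tau_2,\tau_3}(p,q)$ on the compact set $\overline\Omega\times\overline\Omega$, against the nonnegative penalty $(p,q)\mapsto\frac{1}{2}(x_1-y_1)^2$ with parameter $\tau_1$. Since the zero set of the penalty is $\{x_1=y_1\}$, on which the penalized function is exactly $u(p)-v(q)-\varphi_{\tau_2,\tau_3}(p,q)$, that lemma yields $\lim_{\tau_1\to\infty}\tau_1(x_1^{\vec\tau}-y_1^{\vec\tau})^2=0$, $\lim_{\tau_1\to\infty}M_{\vec\tau}=M_{\tau_2,\tau_3}$, and that every subsequential limit of the maximizers $(p_{\vec\tau},q_{\vec\tau})$ lies in $\{x_1=y_1\}$ and realizes $M_{\tau_2,\tau_3}$.

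\textbf{Stages two and three.} Next, with $\tau_3>0$ fixed, I would run the same lemma on the compact set $(\overline\Omega\times\overline\Omega)\cap\{x_1=y_1\}$, with penalty $\frac{1}{2}(x_2-y_2)^2$ and parameter $\tau_2$, obtaining $\lim_{\tau_2\to\infty}\tau_2(x_2^{\tau_2,\tau_3}-y_2^{\tau_2,\tau_3})^2=0$, $\lim_{\tau_2\to\infty}M_{\tau_2,\tau_3}=M_{\tau_3}$, and maximizers whose subsequential limits are matched in both $x_1$ and $x_2$. Finally, on $(\overline\Omega\times\overline\Omega)\cap\{x_1=y_1,\,x_2=y_2\}$ --- whose intersection with the zero set $\{x_3=y_3\}$ of the penalty $\frac{1}{2}(x_3-y_3)^2$ is the diagonal, where $u(p)-v(q)=u(p)-v(p)$ --- the same lemma with parameter $\tau_3$ gives $\lim_{\tau_3\to\infty}\tau_3(x_3^{\tau_3}-y_3^{\tau_3})^2=0$ and $\lim_{\tau_3\to\infty}M_{\tau_3}=\sup_{\overline\Omega}(u-v)=u(p_0)-v(p_0)$. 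Composing the three identities $\lim_{\tau_1}M_{\vec\tau}=M_{\tau_2,\tau_3}$, $\lim_{\tau_2}M_{\tau_2,\tau_3}=M_{\tau_3}$, and $\lim_{\tau_3}M_{\tau_3}=u(p_0)-v(p_0)$ proves the first displayed conclusion; the matched-coordinate assertion is then immediate, since for each $M_{\tau_{l+1},\dots,\tau_3}$ the constraints $x_1=y_1,\dots,x_l=y_l$ are built into the defining supremum.

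\textbf{Vanishing of the penalty.} For the second conclusion I would peel the penalty off one summand at a time, in the same order. At the innermost level $\varphi_{\vec\tau}(p_{\vec\tau},q_{\vec\tau})=\frac{1}{2}\tau_1(x_1^{\vec\tau}-y_1^{\vec\tau})^2+\varphi_{\tau_2,\tau_3}(p_{\vec\tau},q_{\vec\tau})$; the first summand tends to $0$ by Stage one, while the second, using the convergence of maximizers supplied by \cite[Lemma 3.1]{CIL:UGTVS} together with the continuity of $\varphi_{\tau_2,\tau_3}$, tends along the extracted subsequence to $\varphi_{\tau_2,\tau_3}$ evaluated at a maximizer of $M_{\tau_2,\tau_3}$. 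Repeating this --- discarding $\frac{1}{2}\tau_2(x_2-y_2)^2$ in Stage two, and then being left with $\varphi_{\tau_3}(p_{\tau_3},q_{\tau_3})=\frac{1}{2}\tau_3(x_3^{\tau_3}-y_3^{\tau_3})^2$, which tends to $0$ by Stage three --- gives $\lim_{\tau_3}\lim_{\tau_2}\lim_{\tau_1}\varphi_{\vec\tau}(p_{\vec\tau},q_{\vec\tau})=0$. The step I expect to be the main obstacle is exactly this assembly: the $\tau_k(x_k-y_k)^2\to0$ facts and the $M$-convergences are produced at three different levels of the triple limit, so one must be careful that the subsequential limit maximizer extracted at each stage is the one fed into the next, and that coordinates already matched stay matched --- a diagonal-extraction argument that is routine in spirit but delicate to write down cleanly.
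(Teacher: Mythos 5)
The paper does not give a proof of Lemma \ref{IMP}; it simply states the result and cites \cite{B:GS}, so there is no internal proof to compare against. Your argument --- three nested applications of \cite[Lemma 3.1]{CIL:UGTVS}, peeling off one coordinate at a time with the constraints $x_1=y_1,\dots,x_l=y_l$ accumulating and the penalty dropping a summand at each stage, together with the reading that $p_0$ is an interior maximizer of $u-v$ over $\overline\Omega$ --- is precisely the argument of \cite{B:GS}, and it is correct; the subsequential/diagonal selection of maximizers that you flag as the delicate point is indeed what is needed to make the inner limit of $\varphi_{\tau_2,\tau_3}(p_{\vec\tau},q_{\vec\tau})$ well-defined and to feed a consistent maximizer into the next stage, and this re-selection is legitimate because the lemma's definitions allow any maximizer to be chosen.
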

The proof of the iterated Maximum Principle leads immediately to the following results which permit us to take the parameters $\tau_k\to\infty$ in any order, and to speak of the full limit as $\tau_{k_1},\tau_{k_2},\tau_{k_3}\to\infty$.
\begin{corollary}\cite[Corollary 4.4]{B:GS} Under the conditions of (Lemma \ref{IMP}), each iterated limit of $M_{\tau_{k_1},\tau_{k_2},\tau_{k_3}}=u(p_0)-v(p_0)$. In other words, 
\[\lim_{\tau_{k_1}\to\infty}\lim_{\tau_{k_2}\to\infty}\lim_{\tau_{k_3}\to\infty}M_{\tau_{k_1},\tau_{k_2},\tau_{k_3}}(p_{\tau_{k_1},\tau_{k_2},\tau_{k_3}},q_{\tau_{k_1},\tau_{k_2},\tau_{k_3}})=u(p_0)-v(q_0).\]
Consequently,
\[\lim_{\tau_{k_1}\to\infty}\lim_{\tau_{k_2}\to\infty}\lim_{\tau_{k_3}\to\infty}\varphi_{\tau_{k_1},\tau_{k_2},\tau_{k_3}}(p_{\tau_{k_1},\tau_{k_2},\tau_{k_3}},q_{\tau_{k_1},\tau_{k_2},\tau_{k_3}})=0.\]
\end{corollary}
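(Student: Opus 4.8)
The plan is to reduce every one of the six orderings to the single ordering already established in Lemma~\ref{IMP}, by relabeling the coordinate axes. The key observation is that nothing in Lemma~\ref{IMP} distinguishes the three coordinates in an essential way. The penalty $\varphi_{\tau_1,\tau_2,\tau_3}(p,q)=\frac12\sum_{k=1}^3\tau_k(x_k-y_k)^2$ is a sum, hence invariant under simultaneously permuting the parameters $\tau_k$ and the coordinate slots; the auxiliary penalties $\varphi_{\tau_2,\tau_3}$ and $\varphi_{\tau_3}$, together with their constraints $x_1=y_1$ and $x_2=y_2$, are produced by a uniform recipe---discard the terms whose parameters have already been sent to infinity and freeze the corresponding coordinate differences---and the only genuinely analytic ingredients of the lemma, namely compactness of $\overline\Omega$, upper semicontinuity of $u$, lower semicontinuity of $v$, and the existence of $p_0$ with $u(p_0)-v(p_0)>0$, are stated without reference to coordinates. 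In particular the Martinet vector fields $X_1,X_2,X_3$ do not appear in Lemma~\ref{IMP} at all, so there is no obstruction to permuting indices.

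Concretely, I would fix a permutation $(k_1,k_2,k_3)$ of $(1,2,3)$ and reread Lemma~\ref{IMP} with the coordinate axes listed in the order $x_{k_3},x_{k_2},x_{k_1}$; this is precisely the order in which the parameters $\tau_{k_3},\tau_{k_2},\tau_{k_1}$ are to be driven to infinity, from innermost to outermost, so that it lines up with the order $\tau_1,\tau_2,\tau_3$ of the lemma. Under this relabeling the penalty $\varphi_{\tau_{k_1},\tau_{k_2},\tau_{k_3}}$ is unchanged, since the defining sum does not see the order, and the constrained suprema $M_{\tau_{k_2},\tau_{k_3}}$ over $\{x_{k_3}=y_{k_3}\}$ and $M_{\tau_{k_3}}$ over $\{x_{k_2}=y_{k_2},\,x_{k_3}=y_{k_3}\}$ are exactly the quantities $M_{\tau_2,\tau_3}$ and $M_{\tau_3}$ of the lemma for the relabeled axes. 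Lemma~\ref{IMP} then yields verbatim the two displayed identities of the corollary, with outermost limit $\tau_{k_1}\to\infty$ and innermost limit $\tau_{k_3}\to\infty$. Since the permutation was arbitrary, every iterated limit of $M_{\tau_{k_1},\tau_{k_2},\tau_{k_3}}$ equals $u(p_0)-v(p_0)$, and the corresponding vanishing of $\varphi_{\tau_{k_1},\tau_{k_2},\tau_{k_3}}$ is the second conclusion of the lemma.

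The only point demanding care is the bookkeeping: one must match the innermost parameter limit in the corollary, $\tau_{k_3}\to\infty$, with the first coordinate driven to the diagonal in Lemma~\ref{IMP}, $\tau_1\to\infty$, and verify that the constrained intermediate maxima transform correctly under the relabeling. I expect this to be the only obstacle, and it is notational rather than mathematical; no new estimate is required. Equivalently one could skip the relabeling and reinspect the proof of Lemma~\ref{IMP} directly, observing that each of its three stages---extract a convergent subsequence of maximizers by compactness and semicontinuity, deduce that the newly penalized coordinate difference tends to zero, and pass to the next constrained supremum---goes through unchanged no matter which coordinate is treated at which stage.
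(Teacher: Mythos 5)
Your proposal is correct and is essentially the argument the paper intends; the paper does not supply its own proof here, citing \cite[Corollary 4.4]{B:GS} and remarking that the corollary ``leads immediately'' from the proof of Lemma~\ref{IMP}. Your relabeling argument is a clean, rigorous way to make that remark precise: since the hypotheses of Lemma~\ref{IMP} (compactness of $\overline\Omega$, semicontinuity, existence of $p_0$) are coordinate-free, the penalty $\varphi_{\tau_1,\tau_2,\tau_3}$ is a symmetric sum, and the Martinet vector fields play no role in the lemma itself, applying Lemma~\ref{IMP} after a permutation of coordinate indices directly yields every iterated-limit ordering. Your alternative framing (reinspecting the proof stage by stage and noting each step is coordinate-agnostic) is equally valid and is closer to the phrasing the paper uses. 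One tiny bookkeeping item worth flagging: in the paper's statement of Lemma~\ref{IMP} the constraint attached to $M_{\tau_3}$ is written only as $x_2=y_2$, though the iterative structure (and your reading) makes clear it should be $x_1=y_1,\,x_2=y_2$; your transcription is the correct one, and it is what makes the relabeling match up exactly.
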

\begin{lemma}\cite[Lemma 4.5]{B:GS}
Under the conditions of (Lemma \ref{IMP}), the full limit of $M_{\tau_1,\tau_2,\tau_3}$ exists and is equal to $u(p_0)-v(p_0)$. More precisely,
\[\lim_{\tau_{1},\tau_{2},\tau_{3}\to\infty}M_{\tau_{1},\tau_{2},\tau_{3}}=u(p_0)-v(p_0).\]
In addition,
\[\lim_{\tau_1,\tau_2,\tau_3\to\infty}\varphi_{\tau_1,\tau_2,\tau_3}(p_{\tau_1,\tau_2,\tau_3},q_{\tau_1,\tau_2,\tau_3})=0.\]
\end{lemma}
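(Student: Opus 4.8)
The plan is to deduce the $\tau_{k_1},\tau_{k_2},\tau_{k_3}\to\infty$ (full, non-iterated) limit from the iterated limits of Lemma~\ref{IMP} and its Corollary by a squeeze argument, exactly mirroring the strategy of \cite[Lemma 4.5]{B:GS}. The key structural facts I would exploit are: (i) the penalty $\varphi_{\tau_1,\tau_2,\tau_3}$ is a sum of nonnegative terms, each nondecreasing in the corresponding parameter, so $M_{\tau_1,\tau_2,\tau_3}$ is nonincreasing in each $\tau_k$ separately; (ii) $M_{\tau_1,\tau_2,\tau_3}\geq M_{p_0}:=u(p_0)-v(p_0)>0$ for every choice of parameters, obtained by testing the supremum at the diagonal point $(p_0,p_0)$; and (iii) by the Corollary, every iterated limit equals $M_{p_0}$.

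First I would record the two-sided bound: for all $\vec{\tau}$ with positive coordinates,
\[
u(p_0)-v(p_0)\ \leq\ M_{\tau_1,\tau_2,\tau_3}\ \leq\ \sup_{\overline\Omega\times\overline\Omega}\bigl(u(p)-v(q)\bigr),
\]
so $(M_{\tau_1,\tau_2,\tau_3})$ is a bounded net. To upgrade ``all iterated limits equal $M_{p_0}$'' to ``the full limit equals $M_{p_0}$'', I would argue by contradiction: if the full limit did not exist or did not equal $M_{p_0}$, then since the net is bounded there would be a sequence $\vec{\tau}^{(n)}=(\tau_1^{(n)},\tau_2^{(n)},\tau_3^{(n)})$ with all coordinates $\to\infty$ along which $M_{\vec{\tau}^{(n)}}\to L$ for some $L\neq M_{p_0}$ (necessarily $L\geq M_{p_0}$ by the lower bound, so $L>M_{p_0}$). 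Using monotonicity in each coordinate, I would then sandwich: given any target thresholds $T_1,T_2,T_3$, for $n$ large enough $\tau_k^{(n)}\geq T_k$ for each $k$, whence $M_{\vec{\tau}^{(n)}}\leq M_{T_1,T_2,T_3}$; letting $n\to\infty$ gives $L\leq M_{T_1,T_2,T_3}$ for \emph{all} $T_1,T_2,T_3$, and then taking the iterated limit in $T_1,T_2,T_3$ forces $L\leq M_{p_0}$, a contradiction. Hence $\lim_{\tau_1,\tau_2,\tau_3\to\infty}M_{\tau_1,\tau_2,\tau_3}=u(p_0)-v(p_0)$.

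For the second assertion I would again use $M_{\tau_1,\tau_2,\tau_3}=u(p_{\vec\tau})-v(q_{\vec\tau})-\varphi_{\vec\tau}(p_{\vec\tau},q_{\vec\tau})$ and the elementary inequality $u(p_{\vec\tau})-v(q_{\vec\tau})\leq \sup_{\overline\Omega\times\overline\Omega}(u(p)-v(q))$ fails to be tight enough; instead I would note $u(p_{\vec\tau})-v(q_{\vec\tau})\leq \bigl(\sup_{\overline\Omega}u\bigr)-\bigl(\inf_{\overline\Omega}v\bigr)$ is bounded, and more usefully that along any subsequential limit $p_{\vec\tau}\to p^*$, $q_{\vec\tau}\to q^*$ the penalty blowing up would force $p^*=q^*$ (else $\varphi_{\vec\tau}\to\infty$, contradicting boundedness of $M_{\vec\tau}$ from below combined with boundedness of $u-v$), and then upper/lower semicontinuity gives $\limsup(u(p_{\vec\tau})-v(q_{\vec\tau}))\leq u(p^*)-v(p^*)\leq M_{p_0}$ (the last inequality because $M_{p_0}=\sup_{\overline\Omega}(u-v)$ is not available here — rather $M_{p_0}$ is just a value, so I would instead combine $M_{\vec\tau}\to M_{p_0}$ with $M_{\vec\tau}=u(p_{\vec\tau})-v(q_{\vec\tau})-\varphi_{\vec\tau}$ to get $\varphi_{\vec\tau}=u(p_{\vec\tau})-v(q_{\vec\tau})-M_{\vec\tau}$, so $\limsup\varphi_{\vec\tau}\leq \bigl(u(p^*)-v(p^*)\bigr)-M_{p_0}$, and separately $\varphi_{\vec\tau}\geq 0$). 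The cleanest route: since every iterated limit of $\varphi_{\vec\tau}$ is $0$ by the Corollary, and $0\leq\varphi_{\vec\tau}=u(p_{\vec\tau})-v(q_{\vec\tau})-M_{\vec\tau}\leq C-M_{\vec\tau}$ with $C$ an upper bound for $u-v$ on $\overline\Omega\times\overline\Omega$ along the diagonal-approaching net, the same squeeze-via-monotonicity argument as above (monotonicity of $\varphi_{\vec\tau}$ in each $\tau_k$ is not automatic, but the map $\vec\tau\mapsto\varphi_{\vec\tau}(p_{\vec\tau},q_{\vec\tau})$ is controlled through $M_{\vec\tau}$) yields $\varphi_{\vec\tau}\to 0$.

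\textbf{Main obstacle.} The delicate point is that the \emph{locations} $p_{\vec\tau},q_{\vec\tau}$ of the maxima need not vary monotonically or even continuously with $\vec\tau$, so one cannot argue directly about $\varphi_{\vec\tau}(p_{\vec\tau},q_{\vec\tau})$; all monotonicity must be funneled through the scalar quantities $M_{\vec\tau}$, which are monotone in each parameter, and the penalty must be recovered a posteriori from the identity $\varphi_{\vec\tau}(p_{\vec\tau},q_{\vec\tau})=u(p_{\vec\tau})-v(q_{\vec\tau})-M_{\vec\tau}$ together with the uniform boundedness of $u$ and $v$ on the compact set $\overline\Omega$. Getting this bookkeeping right — ensuring the squeeze is applied to $M_{\vec\tau}$ and only then transferred to $\varphi_{\vec\tau}$ — is the crux; the rest is the standard monotone-net argument.
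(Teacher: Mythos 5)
Your monotone-net argument for the first limit is sound and is indeed the natural route implied by the paper's remark that the iterated maximum principle ``leads immediately'' to this result: $M_{\vec\tau}$ is nonincreasing in each $\tau_k$ separately (since $\varphi_{\vec\tau}$ is nondecreasing in each parameter at fixed $(p,q)$), bounded below by $u(p_0)-v(p_0)$ via the test pair $(p_0,p_0)$, and the Corollary pins every iterated limit to that value; the sandwich you describe then forces the full limit to coincide.

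Your treatment of the second limit, however, does not close. You correctly flag the obstruction yourself — the maximizing pair $(p_{\vec\tau},q_{\vec\tau})$ need not vary monotonically in $\vec\tau$, so $\varphi_{\vec\tau}(p_{\vec\tau},q_{\vec\tau})$ is not a monotone net, and you remark that ``$M_{p_0}=\sup_{\overline\Omega}(u-v)$ is not available'' — but then you invoke ``the same squeeze-via-monotonicity argument as above,'' which is exactly the step you just said fails. Two concrete repairs. (i) The identification you thought unavailable \emph{is} available: $M_{\vec\tau}\geq u(p)-v(p)$ for every $p\in\overline\Omega$ by testing at $(p,p)$, so passing to the full limit from part one gives $u(p_0)-v(p_0)\geq u(p)-v(p)$ for all $p$; thus $u(p_0)-v(p_0)=\sup_{\overline\Omega}(u-v)$, and your compactness-plus-semicontinuity sketch (extract $p_{\vec\tau^{(n)}}\to p^*$, $q_{\vec\tau^{(n)}}\to q^*$, deduce $p^*=q^*$ from boundedness of $\varphi_{\vec\tau^{(n)}}$, then $\limsup\big(u(p_{\vec\tau^{(n)}})-v(q_{\vec\tau^{(n)}})\big)\leq u(p^*)-v(p^*)\leq u(p_0)-v(p_0)$) then closes, giving $\limsup\varphi_{\vec\tau^{(n)}}\leq 0$. (ii) More efficiently, avoid compactness altogether: since $\varphi_{\vec\tau/2}(p,q)=\tfrac12\varphi_{\vec\tau}(p,q)$, plugging the maximizing pair $(p_{\vec\tau},q_{\vec\tau})$ into the supremum defining $M_{\vec\tau/2}$ gives $M_{\vec\tau}+\tfrac12\varphi_{\vec\tau}(p_{\vec\tau},q_{\vec\tau})\leq M_{\vec\tau/2}$, hence $0\leq\tfrac12\varphi_{\vec\tau}(p_{\vec\tau},q_{\vec\tau})\leq M_{\vec\tau/2}-M_{\vec\tau}\to 0$ by the already-established full limit. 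This second device is the standard Crandall--Ishii--Lions half-penalty trick and derives the second assertion directly from the first, which is exactly the ``bookkeeping through $M_{\vec\tau}$'' you were reaching for.
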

\begin{remark}
Owing to the above lemma, there is no ambiguity in relabeling the intermediate points $p_{\tau_1,\tau_2,\tau_3},q_{\tau_1,\tau_2,\tau_3}$ and function $\varphi_{\tau_1,\tau_2,\tau_3}$ as $p_{\vec{\tau}},q_{\vec{\tau}},\text{and}\;\varphi_{\vec{\tau}}.$ We will also denote the coordinates of $p_{\vec{\tau}},q_{\vec{\tau}}$ as $x_k^{\vec{\tau}},y_k^{\vec{\tau}}$ respectively.
\end{remark}
%%%
\subsection{Infinite Harmonic Functions}
\begin{lemma}\label{inflemma}
Let $u,v,\varphi_{\vec{\tau}}$ and $(p_{\vec{\tau}},q_{\vec{\tau}})$ be as in the Iterated Maximum Principle (Lemma \ref{IMP}). Assume that at least one of the functions $u,v$ is locally $\mathbb{M}$-Lipschitz. Then:
\begin{enumerate}
\renewcommand{\labelenumi}{(\Alph{enumi})}
\item There exist $(\Upsilon^+_{\vec{\tau}},\mathcal{X}_{\vec{\tau}})\in\overline{J}^{2,+}u(p_{\vec{\tau}})$ and $(\Upsilon^-_{\vec{\tau}},\mathcal{Y}_{\vec{\tau}})\in\overline{J}^{2,-}v(q_{\vec{\tau}}).$
\item Define $(p\diamond q)_k$ to be the point whose k-th coordinate coincides with q and whose other coordinates coincide with p. In other words,
\begin{equation*}
(p\diamond q)_1 = (y_1,x_2,x_3), 
(p\diamond q)_2 = (x_1,y_2,x_3), 
\textmd{and\ }(p\diamond q)_3 = (x_1,x_2,y_3).
\end{equation*}
Then for each index k,
\begin{equation}\label{7.1}
\tau_k(x_k^{\vec{\tau}}-y_k^{\vec{\tau}})^2\lesssim d_{CC}(p_{\vec{\tau}},(p_{\vec{\tau}}\diamond q_{\vec{\tau}})_k).
\end{equation}
In particular, for $k=1,2$ we have
\begin{equation}\label{7.2}
\tau_k|x_k^{\vec{\tau}}-y_k^{\vec{\tau}}|=O(1)\;\text{as}\;\tau_k\rightarrow \infty.
\end{equation}
\item Let $\eta^+_{\vec{\tau}}$ and $\eta^-_{\vec{\tau}}$ be the horizontal parts of $\Upsilon^+_{\vec{\tau}}$ and $\Upsilon^-_{\vec{\tau}}$ respectively. That is, from the Twisting Lemma (Lemma \ref{twlemma}), 
\[\left\{
\begin{aligned}
\eta^+_{\vec{\tau}}&= A(p_{\vec{\tau}})\cdot D_{\eucl}(p)\varphi_{\vec{\tau}}(p_{\vec{\tau}},q_{\vec{\tau}})\\
\eta^-_{\vec{\tau}}&= A(q_{\vec{\tau}})\cdot -D_{\eucl}(q)\varphi_{\vec{\tau}}(p_{\vec{\tau}},q_{\vec{\tau}}).
\end{aligned}
\right.\]
We then have that the vector estimate 
\begin{equation}\label{7.3}
\lim_{\tau_3\to\infty}\lim_{\tau_2\to\infty}\lim_{\tau_1\to\infty}\big|||\eta^+_{\vec{\tau}}||^2-||\eta^-_{\vec{\tau}}||^2\big|=0
\end{equation}
holds.
\item Let $\eta^+_{\vec{\tau}}$ and $\eta^-_{\vec{\tau}}$ be as above. Then, the matrix estimate 
\begin{equation}\label{7.4}
\lim_{\tau_3\to\infty}\lim_{\tau_2\to\infty}\lim_{\tau_1\to\infty}\langle\mathcal{X}_{\vec{\tau}}\cdot\eta^+_{\vec{\tau}},\eta^+_{\vec{\tau}}\rangle-\langle\mathcal{Y}_{\vec{\tau}}\cdot\eta^-_{\vec{\tau}},\eta^-_{\vec{\tau}}\rangle=0.
\end{equation}
holds.
\end{enumerate}
\end{lemma}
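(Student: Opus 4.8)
The plan is to exploit the nested-limit structure of the Iterated Maximum Principle together with the explicit coordinate formulas from the Twisting Lemma, treating each parameter regime in sequence so that the ``active'' penalty direction behaves exactly like the single-parameter penalty in Lemma \ref{mmp}. First, for part (A), I would apply the Euclidean Theorem of Sums \cite[Theorem 3.2]{CIL:UGTVS} to the function $u(p)-v(q)-\varphi_{\vec\tau}(p,q)$ at the maximizing pair $(p_{\vec\tau},q_{\vec\tau})$, producing $(D_{\eucl}(p)\varphi_{\vec\tau}, X)\in\overline J^{2,+}_{\eucl}u(p_{\vec\tau})$ and $(-D_{\eucl}(q)\varphi_{\vec\tau}, Y)\in\overline J^{2,-}_{\eucl}v(q_{\vec\tau})$ with $X\le Y$ up to the usual error; then push these forward through the Twisting Lemma to obtain $(\Upsilon^+_{\vec\tau},\mathcal X_{\vec\tau})$ and $(\Upsilon^-_{\vec\tau},\mathcal Y_{\vec\tau})$ in the Martinet jets. (Here the local $\mathbb M$-Lipschitz hypothesis on $u$ or $v$ is what guarantees the gradient entries, hence the jets, stay bounded so the closures are nonempty in the limit.)

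For part (B), the point is that $\varphi_{\vec\tau}(p,q)=\tfrac12\sum_k\tau_k(x_k-y_k)^2$ is, along the curve from $p_{\vec\tau}$ to $(p_{\vec\tau}\diamond q_{\vec\tau})_k$ (which moves only the $k$-th coordinate), controlled by the Carnot--Carath\'eodory distance via the estimate \eqref{distest}: the $k$-th term $\tau_k(x_k^{\vec\tau}-y_k^{\vec\tau})^2$ is dominated by comparing the value of $M_{\vec\tau}$ against the admissible competitor obtained by sliding $p_{\vec\tau}$ to that diamond point, using upper/lower semicontinuity and (when needed) the Lipschitz bound to absorb the $u$- and $v$-differences. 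Maximality of $M_{\vec\tau}$ then forces the penalty increment to be bounded by a distance increment, giving \eqref{7.1}; for $k=1,2$ the relevant exponent in \eqref{distest} is $1$, so $\tau_k|x_k^{\vec\tau}-y_k^{\vec\tau}|=O(1)$, which is \eqref{7.2}. The $k=3$ case is weaker because of the $(r_{x_1}+1)^{-1}$ exponent on the $z$-coordinate, and that is exactly why the third parameter must be handled by a genuine limit rather than an $O(1)$ bound.

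For parts (C) and (D), I would insert the explicit Twisting-Lemma formulas. Writing $a_k:=x_k^{\vec\tau}-y_k^{\vec\tau}$, the horizontal parts are $\eta^+_{\vec\tau}=(\tau_1 a_1,\ \tau_2 a_2 + f(x_1^{\vec\tau})\tau_3 a_3)$ and $\eta^-_{\vec\tau}=(\tau_1 a_1,\ \tau_2 a_2 + f(y_1^{\vec\tau})\tau_3 a_3)$, so $\|\eta^+_{\vec\tau}\|^2-\|\eta^-_{\vec\tau}\|^2$ collapses to a sum of terms each carrying a factor $f(x_1^{\vec\tau})-f(y_1^{\vec\tau})$ or $f(x_1^{\vec\tau})^2-f(y_1^{\vec\tau})^2$, i.e.\ each carrying a factor $a_1=x_1^{\vec\tau}-y_1^{\vec\tau}$ (by real-analyticity, hence local Lipschitzness, of $f$). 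Taking first $\tau_1\to\infty$: by the Iterated Maximum Principle the first coordinates coincide once the outer parameters are fixed, $x_1^{\tau_2,\tau_3}=y_1^{\tau_2,\tau_3}$, so $a_1=0$ in that inner limit and the vector difference vanishes, establishing \eqref{7.3}. For the matrix estimate \eqref{7.4}, I would mimic the proof of Lemma \ref{5.2}: form $\langle\mathcal X_{\vec\tau}\eta^+_{\vec\tau},\eta^+_{\vec\tau}\rangle-\langle\mathcal Y_{\vec\tau}\eta^-_{\vec\tau},\eta^-_{\vec\tau}\rangle$, use $X\le Y$ (mod error) to bound it above by $\langle C(\cdot),(\cdot)\rangle$ plus the twisting-matrix contributions, where $C=(D^2_{\eucl}\varphi_{\vec\tau})$ is now diagonal-in-blocks with entries $\tau_1,\ 3?$—rather, with the quadratic penalty the Hessian entries are $\tau_k$—so the quadratic form, evaluated on the (nearly equal) vectors $A_p^T\eta^+$ and $A_q^T\eta^-$, again factors through $a_1$ and through the already-controlled quantities $\tau_2 a_2=O(1)$, $\tau_1 a_1=O(1)$, while the genuinely dangerous $\tau_3 a_3$ terms are multiplied by $f(x_1^{\vec\tau})-f(y_1^{\vec\tau})\sim a_1$. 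The twisting-matrix term $\mathcal W$ contributes $\tau_3 a_3^{?}$ with a factor $f'(x_1^{\vec\tau})-f'(y_1^{\vec\tau})\sim a_1$ after the $\epsilon=\kappa$ reduction. Thus every surviving term has a factor that is killed when $\tau_1\to\infty$ forces $a_1\to 0$, and the remaining innocuous terms vanish as $\tau_2,\tau_3\to\infty$ via the Iterated Maximum Principle's $\varphi_{\vec\tau}\to0$ conclusion.

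The main obstacle I anticipate is part (D): one must verify that \emph{after} the first-coordinate collapse there are no residual matrix terms of the form $\tau_3(x_3^{\vec\tau}-y_3^{\vec\tau})^2$ standing alone without a compensating $a_1$ or $\varphi_{\vec\tau}$ factor — because, unlike $\tau_1 a_1$ and $\tau_2 a_2$, the product $\tau_3 a_3$ need not be $O(1)$. Resolving this requires carefully tracking, in the expansion of $\langle\mathcal X_{\vec\tau}\eta^+,\eta^+\rangle-\langle\mathcal Y_{\vec\tau}\eta^-,\eta^-\rangle$, that the $X\le Y$ comparison is applied to the vectors $A_p^T\eta^+$ and $A_q^T\eta^-$ whose difference is itself $O(a_1)$ in every slot except possibly the last, and that the last-slot discrepancy $f(x_1^{\vec\tau})\tau_3a_3 - f(y_1^{\vec\tau})\tau_3 a_3 = (f(x_1^{\vec\tau})-f(y_1^{\vec\tau}))\tau_3 a_3$ again carries the factor $a_1$; combined with $a_1=0$ in the inner $\tau_1$-limit this makes the whole cross-difference vanish there, and the diagonal $\tau_3 a_3^2$-type pieces are then handled by the outer limit $\varphi_{\vec\tau}\to0$. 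I would organize the computation exactly as in Lemma \ref{5.2} so that the bookkeeping is transparent and the appeal to the nested limits is clean.
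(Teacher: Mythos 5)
Your plan is correct and follows essentially the same route as the paper: Theorem of Sums plus the Twisting Lemma for (A); the competitor argument with $p=(p_{\vec\tau}\diamond q_{\vec\tau})_k$ and the distance estimate \eqref{distest} for (B); and for (C) and (D), writing out the Twisting-Lemma coordinate formulas, observing that every potentially dangerous term carries a factor of $f(x_1^{\vec\tau})-f(y_1^{\vec\tau})$, $f(x_1^{\vec\tau})^2-f(y_1^{\vec\tau})^2$, or $f'(x_1^{\vec\tau})-f'(y_1^{\vec\tau})$ that is killed in the inner limit $\tau_1\to\infty$ while $\tau_2,\tau_3$ are held fixed, with $\tau_1(x_1^{\vec\tau}-y_1^{\vec\tau})=O(1)$ from \eqref{7.2} controlling the $I_2$-type twisting-matrix contribution. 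Your concern about residual $\tau_3(x_3^{\vec\tau}-y_3^{\vec\tau})^2$ pieces turns out to be moot: in the paper's decomposition $I_1+I_2$, \emph{every} term is annihilated already by the inner $\tau_1\to\infty$ limit (with $\tau_2,\tau_3$ fixed the prefactors are bounded), so the outer limits are trivial and nothing needs $\varphi_{\vec\tau}\to0$.
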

\begin{proof}
\textbf{Item (A)}: This follows from the Theorem of Sums \cite[Theorem 3.2]{CIL:UGTVS} and the Martinet Twisting Lemma (Lemma \ref{twlemma}).\\
\textbf{Item (B)}:
By the definition of the points $p_{\vec{\tau}}, q_{\vec{\tau}}$ for all points $p,q\in\Omega$ the inequality
\[u(p)-v(q)-\varphi_{\vec{\tau}}(p,q)\leq u(p_{\vec{\tau}})-v(q_{\vec{\tau}})-\varphi_{\vec{\tau}}(p_{\vec{\tau}},q_{\vec{\tau}})\]
is satisfied. Hence assuming (without loss of generality) that $u$ is $\mathbb{M}$-Lipschitz,  setting $p:=(p_{\vec{\tau}}\diamond q_{\vec{\tau}})_k$ and $q:=q_{\vec{\tau}}$, and recollecting terms, we obtain
\begin{align}
\nonumber\tau_k(x_k^{\vec{\tau}}-y_k^{\vec{\tau}})^2&=\varphi_{\vec{\tau}}(p_{\vec{\tau}},q_{\vec{\tau}})-\varphi_{\vec{\tau}}(p_{\vec{\tau}}\diamond q_{\vec{\tau}})_k,q_{\vec{\tau}})\\
&\leq u(p_{\vec{\tau}})-u((p_{\vec{\tau}}\diamond q_{\vec{\tau}})_k) \leq Kd_{CC}(p_{\vec{\tau}},(p_{\vec{\tau}}\diamond q_{\vec{\tau}})_k) \label{7.5}
\end{align}
where $K$ is the Lipschitz constant for $u$. This is Equation \eqref{7.1}.

 To complete Item B, we turn our attention to the expression $\tau_k|x_k^{\vec{\tau}}-y_k^{\vec{\tau}}|$ for $k=1,2$. If $x_k^{\vec{\tau}}\neq y_k^{\vec{\tau}}$ then  Equation \eqref{7.5} leads to
\begin{equation}\label{7.6}
\tau_k|x_k^{\vec{\tau}}-y_k^{\vec{\tau}}|=\tau_k(x_k^{\vec{\tau}}-y_k^{\vec{\tau}})^2\cdot\frac{1}{|x_k^{\vec{\tau}}-y_k^{\vec{\tau}}|}\leq\frac{Kd_{CC}(p_{\vec{\tau}}, (p_{\vec{\tau}}\diamond q_{\vec{\tau}})_k)}{|x_k^{\vec{\tau}}-y_k^{\vec{\tau}}|}.%\text{as}\;\tau_1,\tau_2,\tau_3\rightarrow\infty.
\end{equation}
Now for $k=1,2$, Equation \eqref{distest} tells us
\begin{equation}\label{7.7}
d_{CC}(p_{\vec{\tau}}, (p_{\vec{\tau}}\diamond q_{\vec{\tau}})_k)\approx |x_k^{\vec{\tau}}-y_k^{\vec{\tau}}|.
\end{equation}
Equation \eqref{7.2} follows.\\ 
\textbf{Item (C)}:
Direct calculation shows
\[\frac{\partial }{\partial x_k}\varphi_{\vec{\tau}}(p_{\vec{\tau}},q_{\vec{\tau}})=\tau_k(x_k^{\vec{\tau}}-y_k^{\vec{\tau}})=-\frac{\partial }{\partial y_k}\varphi_{\vec{\tau}}(p_{\vec{\tau}},q_{\vec{\tau}})\]
so we conclude that
\begin{eqnarray}\label{etadef}
\eta^+_{\vec{\tau}}=
\begin{pmatrix}
\tau_1(x^{\vec{\tau}}_1-y^{\vec{\tau}}_1)\\
\tau_2(x^{\vec{\tau}}_2-y^{\vec{\tau}}_2) +f(x_1^{\vec{\tau}})\tau_3(x^{\vec{\tau}}_3-y^{\vec{\tau}}_3)
\end{pmatrix}
\textmd{\ and\ }
\eta^-_{\vec{\tau}}=
\begin{pmatrix}
\tau_1(x^{\vec{\tau}}_1-y^{\vec{\tau}}_1)\\
\tau_2(x^{\vec{\tau}}_2-y^{\vec{\tau}}_2)+f(y_1^{\vec{\tau}})\tau_3(x^{\vec{\tau}}_3-y^{\vec{\tau}}_3)
\end{pmatrix}
\end{eqnarray}
This leads us to:
\[\big|||\eta_\tau^+||^2-||\eta_\tau^-||^2\big|=2\tau_2\tau_3(x^{\vec{\tau}}_2-y^{\vec{\tau}}_2)(x^{\vec{\tau}}_3-y^{\vec{\tau}}_3)\big(f(x_1^{\vec{\tau}})-f(y_1^{\vec{\tau}})\big)+\tau^2_3(x^{\tau}_3-y^{\vec{\tau}}_3)^2\big(f(x_1^{\vec{\tau}})^2-f(y_1^{\vec{\tau}})^2\big).\]
Now,
\begin{align*}
\lim_{\tau_1\to \infty}\big|||\eta_\tau^+||^2-||\eta_\tau^-||^2\big|&=2\tau_2\tau_3(x^{\vec{\tau}}_2-y^{\vec{\tau}}_2)(x^{\vec{\tau}}_3-y^{\vec{\tau}}_3)\big(f(x_1^0)-f(x_1^0)\big)+\tau_3^2(x^{\vec{\tau}}_3-y^{\vec{\tau}}_3)^2\big(f(x_1^0)^2-f(x_1^0)^2\big)\\
&=2\tau_2\tau_3(x^{\vec{\tau}}_2-y^{\vec{\tau}}_2)(x^{\vec{\tau}}_3-y^{\vec{\tau}}_3)(0)+\tau_3^2(x^{\vec{\tau}}_3-y^{\vec{\tau}}_3)^2(0)\\
&=0
\end{align*}
and thus,
\[\lim_{\tau_3\to \infty}\lim_{\tau_2\to \infty}\lim_{\tau_1\to \infty}\big|||\eta_\tau^+||^2-||\eta_\tau^-||^2\big|=0.\]
The result then follows. \\
\textbf{Item (D)}
From the Twisting Lemma (Lemma \ref{twlemma}) we have 
\begin{equation*}
\langle \mathcal{X}^{\vec{\tau}}\eta^+_{\vec{\tau}}, \eta^+_{\vec{\tau}}\rangle - 
\langle \mathcal{Y}^{\vec{\tau}}\eta^-_{\vec{\tau}}, \eta^-_{\vec{\tau}}\rangle = I_1 + I_2,
\end{equation*}
where
\begin{align*}
I_1 &:= \Big\langle \Big(A(p^{\vec{\tau}})\cdot X^{\vec{\tau}} \cdot A^T(p^{\vec{\tau}})\Big)  \eta^+_{\vec{\tau}}, \eta^+_{\vec{\tau}} \Big\rangle
-  \Big\langle \Big(A(q^{\vec{\tau}})\cdot Y^{\vec{\tau}} \cdot A^T(q^{\vec{\tau}})\Big)  \eta^-_{\vec{\tau}}, \eta^-_{\vec{\tau}} \Big\rangle, \\
I_2 &:= \langle \mathcal{T}(D_p \varphi_{\vec{\tau}}(p^{\vec{\tau}}, q^{\vec{\tau}}), p^{\vec{\tau}})  \eta^+_{\vec{\tau}}, \eta^+_{\vec{\tau}}\rangle
 -\langle \mathcal{T}(-D_q \varphi_{\vec{\tau}}(p^{\vec{\tau}}, q^{\vec{\tau}}), q^{\vec{\tau}})  \eta^-_{\vec{\tau}}, \eta^-_{\vec{\tau}}\rangle.
\end{align*}
The Theorem of Sums (\cite[Theorem 3.2]{CIL:UGTVS} gives us  
\begin{eqnarray*}
I_1 & = & 
 \langle X^{\vec{\tau}}A^T(p^{\vec{\tau}})\eta^+_{\vec{\tau}},A^T(p^{\vec{\tau}})\eta^+_{\vec{\tau}}\rangle-
  \langle Y^{\vec{\tau}}A^T(q^{\vec{\tau}})\eta^-_{\vec{\tau}},A^T(q^{\vec{\tau}})\eta^-_{\vec{\tau}}\rangle\\
  & \leq & \Bigg\langle C \begin{pmatrix} A^T(p^{\vec{\tau}})\eta^+_{\vec{\tau}} \\ A^T(q^{\vec{\tau}})\eta^-_{\vec{\tau}})\end{pmatrix}, \begin{pmatrix} A^T(p^{\vec{\tau}})\eta^+_{\vec{\tau}} \\ A^T(q^{\vec{\tau}})\eta^-_{\vec{\tau}})\end{pmatrix} \Bigg\rangle
\end{eqnarray*}
where
\begin{align*}
C& = \begin{pmatrix} \tau_1+2\delta\tau_1^2 & 0 & 0&-(\tau_1+2\delta\tau_1^2)&0&0\\  
0 & \tau_2+2\delta\tau_2^2 &0&0&-(\tau_2+2\delta\tau_2^2)&0\\
0&0&\tau_3+2\delta\tau_3^2&0&0&-(\tau_3+2\delta\tau_3^2)\\
-( \tau_1+2\delta\tau_1^2) & 0 & 0&\tau_1+2\delta\tau_1^2&0&0\\  
0 & -(\tau_2+2\delta\tau_2^2) &0&0&\tau_2+2\delta\tau_2^2&0\\
0&0&-(\tau_3+2\delta\tau_3^2)&0&0&\tau_3+2\delta\tau_3^2
\end{pmatrix}\\
&=\begin{pmatrix}\mathfrak{B}&-\mathfrak{B}\\-\mathfrak{B}&\mathfrak{B}\end{pmatrix}_.
\end{align*}
We then have 
\begin{equation}\label{I1est}
I_1 \le \Big\langle \mathfrak{B} \cdot \bigg(A^T(p^{\vec{\tau}})\eta^+_{\vec{\tau}} - A^T(q^{\vec{\tau}})\eta^-_{\vec{\tau}}\bigg), \bigg(A^T(p^{\vec{\tau}})\eta^+_{\vec{\tau}} - A^T(q^{\vec{\tau}})\eta^-_{\vec{\tau}}\bigg)\Big\rangle.
\end{equation}

Using Equation \eqref{etadef} and the Twisting Lemma \ref{twlemma}, we have
$A^T(p^{\vec{\tau}})\eta^+_{\vec{\tau}} $ is given by 
\[
\begin{pmatrix}
\tau_1(x^{\vec{\tau}}_1-y^{\vec{\tau}}_1)\\
\mbox{} \\
\tau_2(x^{\vec{\tau}}_2-y^{\vec{\tau}}_2)+f(x_1^{\vec{\tau}})\tau_3(x^{\vec{\tau}}_3-y^{\vec{\tau}}_3)\\
\mbox{} \\
\tau_2(x^{\vec{\tau}}_2-y^{\vec{\tau}}_2)+f(x_1^{\vec{\tau}})\tau_3(x^{\vec{\tau}}_3-y^{\vec{\tau}}_3)+f(x_1^{\vec{\tau}})\tau_2(x^{\vec{\tau}}_2-y^{\vec{\tau}}_2)+f(x_1^{\vec{\tau}})^2\tau_3(x^{\vec{\tau}}_3-y^{\vec{\tau}}_3)
\end{pmatrix} 
\]
and $A^T(q^{\vec{\tau}})\eta^-_{\vec{\tau}}$ is given by
\[
\begin{pmatrix}
\tau_1(x^{\vec{\tau}}_1-y^{\vec{\tau}}_1)\\
\mbox{} \\
\tau_2(x^{\vec{\tau}}_2-y^{\vec{\tau}}_2)+f(y_1^{\vec{\tau}})\tau_3(x^{\vec{\tau}}_3-y^{\vec{\tau}}_3)\\
\mbox{} \\
\tau_2(x^{\vec{\tau}}_2-y^{\vec{\tau}}_2)+f(y_1^{\vec{\tau}})\tau_3(x^{\vec{\tau}}_3-y^{\vec{\tau}}_3)+f(y_1^{\vec{\tau}})\tau_2(x^{\vec{\tau}}_2-y^{\vec{\tau}}_2)+f(y_1^{\vec{\tau}})^2\tau_3(x^{\vec{\tau}}_3-y^{\vec{\tau}}_3)
\end{pmatrix}_.
\]
Thus, $A^T(p^{\vec{\tau}})\eta^+_{\vec{\tau}}-A^T(q^{\vec{\tau}})\eta^-_{\vec{\tau}}$ is given by
\[
\begin{pmatrix}
0\\
\mbox{} \\
(f(x_1^{\vec{\tau}})-f(y_1^{\vec{\tau}}))\tau_3(x^{\vec{\tau}}_3-y^{\vec{\tau}}_3)\\
\mbox{} \\
(f(x_1^{\vec{\tau}})-f(y_1^{\vec{\tau}}))\big(\tau_2(x^{\vec{\tau}}_2-y^{\vec{\tau}}_2)-\tau_3(x^{\vec{\tau}}_3-y^{\vec{\tau}}_3)\big)+(f(x_1^{\vec{\tau}})^2-f(y_1^{\vec{\tau}})^2)\tau_3(x^{\vec{\tau}}_3-y^{\vec{\tau}}_3)
\end{pmatrix}_.
\]
Equation \eqref{I1est} can then be rewritten as 
\begin{eqnarray*}
I_1 & \le & \big(\tau_2+2\delta\tau_2^2\big)(f(x_1^{\vec{\tau}})-f(y_1^{\vec{\tau}}))\tau_3(x^{\vec{\tau}}_3-y^{\vec{\tau}}_3) \\
 & & \mbox{}+\big(\tau_3+2\delta\tau_3^2\big)\Bigg((f(x_1^{\vec{\tau}})-f(y_1^{\vec{\tau}}))\big(\tau_2(x^{\vec{\tau}}_2-y^{\vec{\tau}}_2)-\tau_3(x^{\vec{\tau}}_3-y^{\vec{\tau}}_3)\big) \\
  & &\mbox{}+(f(x_1^{\vec{\tau}})^2-f(y_1^{\vec{\tau}})^2)\tau_3(x^{\vec{\tau}}_3-y^{\vec{\tau}}_3)\Bigg).
\end{eqnarray*}
This leads us to 
\begin{eqnarray*}
\lim_{\tau_1 \to \infty}I_1 & \le & \big(\tau_2+2\delta\tau_2^2\big)(f(x_1^0)-f(x_1^0))\tau_3(x^{\vec{\tau}}_3-y^{\vec{\tau}}_3) \\
 & & \mbox{}+\big(\tau_3+2\delta\tau_3^2\big)\Bigg((f(x^0_1)-f(x_1^0))\big(\tau_2(x^{\vec{\tau}}_2-y^{\vec{\tau}}_2)-\tau_3(x^{\vec{\tau}}_3-y^{\vec{\tau}}_3)\big) \\
  & &\mbox{}+(f(x_1^0)^2-f(x_1^0)^2)\tau_3(x^{\vec{\tau}}_3-y^{\vec{\tau}}_3)\Bigg)\\
  & = & 0+0+0.
\end{eqnarray*}
We then conclude
\begin{equation*}
\lim_{\tau_3 \to \infty} \lim_{\tau_2 \to \infty}\lim_{\tau_1 \to \infty} I_1 = 0.
\end{equation*}

To estimate the term $I_2$, we note that by the Twisting Lemma (Lemma \ref{twlemma}), we have
\begin{eqnarray*}
\mathcal{T}(D_p \varphi_{\vec{\tau}}(p^{\vec{\tau}}, q^{\vec{\tau}}), p^{\vec{\tau}}) & = &
\begin{pmatrix}
0 & \frac{1}{2}\tau_3(x^{\vec{\tau}}_3-y^{\vec{\tau}}_3)f'(x^{\vec{\tau}}_1) \\
 \frac{1}{2}\tau_3(x^{\vec{\tau}}_3-y^{\vec{\tau}}_3)f'(x^{\vec{\tau}}_1) & 0 
\end{pmatrix} \\
\mbox{} \\
\textmd{and \ \ }
\mathcal{T}(-D_q \varphi_{\vec{\tau}}(p^{\vec{\tau}}, q^{\vec{\tau}}), q^{\vec{\tau}}) & = &
\begin{pmatrix}
0 & \frac{1}{2}\tau_3(x^{\vec{\tau}}_3-y^{\vec{\tau}}_3)f'(y^{\vec{\tau}}_1) \\
 \frac{1}{2}\tau_3(x^{\vec{\tau}}_3-y^{\vec{\tau}}_3)f'(y^{\vec{\tau}}_1) & 0 
\end{pmatrix}_.
\end{eqnarray*}

Then by  Equation \eqref{etadef} we have
\begin{eqnarray*}
I_2 & = & \frac{1}{2}\tau_3(x^{\vec{\tau}}_3-y^{\vec{\tau}}_3)f'(x^{\vec{\tau}}_1)\tau_1(x^{\vec{\tau}}_1-y^{\vec{\tau}}_1)  \Big(\tau_2(x^{\vec{\tau}}_2-y^{\vec{\tau}}_2) +f(x_1^{\vec{\tau}})\tau_3(x^{\vec{\tau}}_3-y^{\vec{\tau}}_3)\Big) \\
&  & \mbox{}- \frac{1}{2}\tau_3(x^{\vec{\tau}}_3-y^{\vec{\tau}}_3)f'(y^{\vec{\tau}}_1)\tau_1(x^{\vec{\tau}}_1-y^{\vec{\tau}}_1)  \Big(\tau_2(x^{\vec{\tau}}_2-y^{\vec{\tau}}_2)+f(y_1^{\vec{\tau}})\tau_3(x^{\vec{\tau}}_3-y^{\vec{\tau}}_3)\Big) \\
 & = & \frac{1}{2}\tau_3(x^{\vec{\tau}}_3-y^{\vec{\tau}}_3)\tau_2(x^{\vec{\tau}}_2-y^{\vec{\tau}}_2)\tau_1(x^{\vec{\tau}}_1-y^{\vec{\tau}}_1)(f'(x^{\vec{\tau}}_1)-f'(y^{\vec{\tau}}_1))\\
  & & \mbox{}+\frac{1}{2}\tau^2_3(x^{\vec{\tau}}_3-y^{\vec{\tau}}_3)^2\tau_1(x^{\vec{\tau}}_1-y^{\vec{\tau}}_1)\bigg(f'(x^{\vec{\tau}}_1)f(x^{\vec{\tau}}_1)-f'(y^{\vec{\tau}}_1)f(y^{\vec{\tau}}_1)\bigg)
\end{eqnarray*}
Using Equation \eqref{7.2} we are then able to compute 
\begin{eqnarray*}
\lim_{\tau_1\to\infty} I_2 & = & 
K\tau_3(x^{\vec{\tau}}_3-y^{\vec{\tau}}_3)\tau_2(x^{\vec{\tau}}_2-y^{\vec{\tau}}_2)(f'(x^0_1)-f'(x^0_1))\\
  & & \mbox{}+K\tau^2_3(x^{\vec{\tau}}_3-y^{\vec{\tau}}_3)^2\bigg(f'(x^0_1)f(x^0_1)-f'(x^0_1)f(x^0_1)\bigg)\\
  & = & 0+0 
\end{eqnarray*}
Part D then follows. 
\end{proof}
Using Lemma \ref{inflemma}, we will now prove a comparison principle for viscosity solutions to the infinite Laplace equation, also called infinite harmonic functions, by employing the Jensen auxiliary functions. \cite{Je:ULE}  Namely, we consider for $\varepsilon > 0$, 
\begin{eqnarray*}
F_\varepsilon(\nabla_0u,(D^2u)^\star) & = & 
 \min\{\|\nabla_0u\|^2-\varepsilon^2,
-\ip{(D^2u)^\star \nabla_0u}{\nabla_0u}\} \\
G_\varepsilon(\nabla_0u,(D^2u)^\star) & = & 
 \max\{\varepsilon^2-\|D\nabla_0u\|^2,
-\ip{(D^2u)^\star \nabla_0u}{\nabla_0u}\} \\
\textmd{and\ \ } \mathcal{F}_\infty & = & -\ip{(D^2u)^\star \nabla_0u}{\nabla_0u}.
\end{eqnarray*}
We observe that a viscosity solution to $F_\varepsilon=0$ is a viscosity infinite supersolution to $\mathcal{F}_\infty=0$ and a viscosity solution to $G_\varepsilon=0$ is viscosity infinite subsolution to $\mathcal{F}_\infty$.
\begin{theorem}\label{fthm}
Let $u$ be a viscosity subsolution and $v$ a
viscosity supersolution to
\[F_\varepsilon(\nabla_0f(p),(D^2f(p))^\star)=0\] in a bounded domain $\Omega$. In addition, assume one of $u, v$ is locally Lipschitz.  Then, if $u\leq v$ on $\partial \Omega$ then $u\leq v$ in $\overline{\Omega}$. 
\end{theorem}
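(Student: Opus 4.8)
The plan is to argue by contradiction, pushing the doubling-of-variables scheme through the Iterated Maximum Principle (Lemma \ref{IMP}) and the jet estimates of Lemma \ref{inflemma}, and inserting a Jensen-type perturbation of $v$ to overcome the degeneracy of $F_\varepsilon$. First I would suppose $\sup_{\overline{\Omega}}(u-v)=\delta_0>0$. The hypothesis $u\le v$ on $\partial\Omega$ rules out the maximum being attained on $\partial\Omega$ (where $u-v\le 0$), so the supremum of the upper semicontinuous function $u-v$ over the compact set $\overline{\Omega}$ is attained at an interior point. Before running the maximum principle I would pass from $v$ to a perturbed supersolution $v_t:=(1+t)v+t\phi+k_t$, with $t>0$ small, $k_t$ a constant chosen so that $v_t\ge v$ on $\overline{\Omega}$ (permissible because $F_\varepsilon$ does not see the function value), and $\phi$ a fixed smooth function engineered so that $v_t$ becomes a \emph{strict} viscosity supersolution of $F_\varepsilon=0$: the dilation enlarges the horizontal gradient, giving a strict margin on the first branch of $F_\varepsilon$, and $\phi$ is chosen so the second branch stays strictly negative — a natural candidate in the Martinet frame being $\phi=-e^{\lambda x_1}$ with $\lambda$ small, for which $\nabla_0\phi=(-\lambda e^{\lambda x_1},0)$, $(D^2\phi)^\star=\operatorname{diag}(-\lambda^2 e^{\lambda x_1},0)\le 0$, and $\Delta_\infty\phi=-\lambda^4 e^{3\lambda x_1}<0$. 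For $t$ small, $u-v_t$ still has a positive interior maximum, so it suffices to contradict $\sup_{\overline{\Omega}}(u-v_t)>0$ and then let $t\to 0$.

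Next I would apply Lemma \ref{IMP} — with the Corollary and Lemma that legitimize the full limit $\tau_1,\tau_2,\tau_3\to\infty$ — to $u$ and $v_t$, obtaining points $p_{\vec{\tau}},q_{\vec{\tau}}$ converging to a common interior point with $\varphi_{\vec{\tau}}(p_{\vec{\tau}},q_{\vec{\tau}})\to 0$, and then Lemma \ref{inflemma}(A) to extract jet elements $(\Upsilon^+_{\vec{\tau}},\mathcal{X}_{\vec{\tau}})\in\overline{J}^{2,+}u(p_{\vec{\tau}})$ and $(\Upsilon^-_{\vec{\tau}},\mathcal{Y}_{\vec{\tau}})\in\overline{J}^{2,-}v_t(q_{\vec{\tau}})$ with horizontal parts $\eta^+_{\vec{\tau}},\eta^-_{\vec{\tau}}$ as in \eqref{etadef}. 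The leverage is that $F_\varepsilon$ is a \emph{minimum}: the supersolution property forces \emph{both} branches to be nonnegative on $\overline{J}^{2,-}v_t$, so with the strictness built into $v_t$ one gets, for all large $\vec{\tau}$, $\|\eta^-_{\vec{\tau}}\|^2\ge\varepsilon^2+c(t)$ and $\ip{\mathcal{Y}_{\vec{\tau}}\eta^-_{\vec{\tau}}}{\eta^-_{\vec{\tau}}}\le -c(t)$ for some $c(t)>0$ with $c(t)\to 0$; whereas the subsolution property of $u$ only says $\min\{\|\eta^+_{\vec{\tau}}\|^2-\varepsilon^2,\,-\ip{\mathcal{X}_{\vec{\tau}}\eta^+_{\vec{\tau}}}{\eta^+_{\vec{\tau}}}\}\le 0$, i.e.\ at least one branch is $\le 0$.

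To close the argument I would combine these with Lemma \ref{inflemma}(C),(D). The vector estimate \eqref{7.3} gives $\bigl|\,\|\eta^+_{\vec{\tau}}\|^2-\|\eta^-_{\vec{\tau}}\|^2\bigr|\to 0$ in the iterated limit, so $\|\eta^+_{\vec{\tau}}\|^2\ge\varepsilon^2+c(t)-o(1)>\varepsilon^2$ once $\vec{\tau}$ is large; hence the first branch of the subsolution inequality is strictly positive and the second must be $\le 0$, i.e.\ $\ip{\mathcal{X}_{\vec{\tau}}\eta^+_{\vec{\tau}}}{\eta^+_{\vec{\tau}}}\ge 0$. The matrix estimate \eqref{7.4} gives $\ip{\mathcal{X}_{\vec{\tau}}\eta^+_{\vec{\tau}}}{\eta^+_{\vec{\tau}}}-\ip{\mathcal{Y}_{\vec{\tau}}\eta^-_{\vec{\tau}}}{\eta^-_{\vec{\tau}}}\to 0$, so that
\[0\le\ip{\mathcal{X}_{\vec{\tau}}\eta^+_{\vec{\tau}}}{\eta^+_{\vec{\tau}}}\le\ip{\mathcal{Y}_{\vec{\tau}}\eta^-_{\vec{\tau}}}{\eta^-_{\vec{\tau}}}+o(1)\le -c(t)+o(1)<0\]
for all large $\vec{\tau}$, the desired contradiction; letting $t\to 0$ completes the proof. (One could equally read the contradiction off the first branch: $-\ip{\mathcal{X}_{\vec{\tau}}\eta^+_{\vec{\tau}}}{\eta^+_{\vec{\tau}}}\ge c(t)-o(1)>0$ excludes the second branch, forcing $\|\eta^+_{\vec{\tau}}\|^2\le\varepsilon^2$, which contradicts $\|\eta^+_{\vec{\tau}}\|^2\ge\varepsilon^2+c(t)-o(1)$.)

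The hard part will be the perturbation step. As for the Euclidean $\infty$-Laplace equation, $F_\varepsilon$ does not depend on the unknown function, so the bare Iterated Maximum Principle only pins $\ip{\mathcal{X}_{\vec{\tau}}\eta^+_{\vec{\tau}}}{\eta^+_{\vec{\tau}}}$ and $\ip{\mathcal{Y}_{\vec{\tau}}\eta^-_{\vec{\tau}}}{\eta^-_{\vec{\tau}}}$ toward $0$, with no strict monotonicity in $u$ to exploit; everything hinges on manufacturing a strict inequality. The delicate points I anticipate are: (i) showing that $v_t$ is genuinely a \emph{strict} supersolution of $F_\varepsilon=0$ in the Martinet sense — the dilation controls the gradient branch cleanly, but keeping the second branch strictly negative requires controlling the cross term $t\,\ip{\mathcal{Y}_{\vec{\tau}}\eta^-_{\vec{\tau}}}{\nabla_0\phi}$, and $\mathcal{Y}_{\vec{\tau}}$ is only bounded from one side, so one may instead need sup/inf-convolutions and Alexandrov's theorem in the spirit of Jensen's original argument; (ii) checking that the perturbed maxima still localize in the interior; and (iii) verifying that adding $t\phi$ is compatible with the Twisting Lemma (Lemma \ref{twlemma}) and does not disturb the iterated limits, which means rerunning the computations behind Lemma \ref{inflemma}(C),(D) — with the matrices $A(p_{\vec{\tau}})$, $\mathcal{T}$, $C$ and the perturbed gradient in place — and confirming that the error terms still vanish. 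The local Lipschitz hypothesis on one of $u,v$ enters only to license Lemma \ref{inflemma}(B)--(D).
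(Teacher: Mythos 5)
Your proof is correct and follows essentially the same route as the paper: argue by contradiction, pass to a strict supersolution (the paper just cites \cite{B:HG}; you write out the perturbation $v_t=(1+t)v+t\phi+k_t$ explicitly), run the Iterated Maximum Principle (Lemma \ref{IMP}), extract jets via Lemma \ref{inflemma}(A), and use the vector and matrix estimates of Lemma \ref{inflemma}(C),(D) to contradict the strict margin in the iterated limit. The only organizational difference is that the paper closes the argument in one line via the inequality $\min\{a,b\}-\min\{c,d\}\le\max\{a-c,\,b-d\}$ applied to the two branches of $F_\varepsilon$, whereas you carry out the branch-by-branch case analysis; both rest on exactly the same ingredients and are equivalent.
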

\begin{proof}
Suppose $u(p_0)>v(p_0)$ for some $p_0\in\Omega$.  Using the argument from \cite{B:HG}, we may assume $v$ is a strict supersolution so that using the Iterated Maximum Principle (Lemma \ref{IMP}) and Lemma \ref{inflemma} Parts A and C, we have 
\begin{eqnarray*}
0<\mu(p) & \leq & F_\varepsilon(\eta^-_{\vec{\tau}},\mathcal{Y}_{\vec{\tau}})\\
0 & \geq & F_\varepsilon(\eta^+_{\vec{\tau}},\mathcal{X}_{\vec{\tau}})
\end{eqnarray*}
for an appropriate function $\mu(p)$. Subtracting these two inequalities yields 
\[0<\mu(p) \leq \max\{\|\eta^-_{\vec{\tau}}\|^2-\|\eta^+_{\vec{\tau}}\|^2,
\ip{\mathcal{X}_{\vec{\tau}}\eta^+_{\vec{\tau}}}{\eta^+_{\vec{\tau}}}
-\ip{\mathcal{Y}_{\vec{\tau}}\eta^-_{\vec{\tau}}}{\eta^-_{\vec{\tau}}}\}.\]
The corollary follows from Lemma \ref{inflemma} Parts C and D. 
\end{proof}
This theorem has an immediate corollary.  
\begin{corollary}\label{gcor}
Let $u$ be a viscosity subsolution and $v$ a
viscosity supersolution to
\[G_\varepsilon(\nabla_0f(p),(D^2f(p))^\star)=0\] in a bounded domain $\Omega$. In addition, assume one of $u, v$ is locally Lipschitz.  Then, if $u\leq v$ on $\partial \Omega$ then $u\leq v$ in $\overline{\Omega}$. 
\end{corollary}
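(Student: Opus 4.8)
The plan is to reduce Corollary \ref{gcor} to Theorem \ref{fthm} via the reflection $w\mapsto -w$, which interchanges viscosity sub- and supersolutions and interchanges the operators $F_\varepsilon$ and $G_\varepsilon$. Concretely, I would set $\tilde u:=-v$ and $\tilde v:=-u$. Since $u$ is upper semicontinuous and $v$ is lower semicontinuous, $\tilde u$ is upper semicontinuous and $\tilde v$ is lower semicontinuous, and since one of $u,v$ is locally Lipschitz, one of $\tilde u,\tilde v$ is locally Lipschitz as well (the reflection does not change $\|\nabla_0 w\|$). Moreover $u\le v$ on $\partial\Omega$ is exactly $\tilde u\le\tilde v$ on $\partial\Omega$. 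Thus, once I know that $\tilde u$ is a viscosity subsolution and $\tilde v$ a viscosity supersolution of $F_\varepsilon(\nabla_0 f(p),(D^2 f(p))^\star)=0$ in $\Omega$, Theorem \ref{fthm} immediately gives $\tilde u\le\tilde v$ in $\overline\Omega$, i.e.\ $u\le v$ in $\overline\Omega$.

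The one thing to check is that the reflection really does send a $G_\varepsilon$-solution to an $F_\varepsilon$-solution of the opposite type. First I would extend the identity $J^{2,-}w(p_0)=-J^{2,+}(-w)(p_0)$ recorded after the jet definition to the closures of the jets, so that $(\eta,X)\in\overline{J}^{2,+}\tilde u(p)$ forces $(-\eta,-X)\in\overline{J}^{2,-}v(p)$ and $(\eta,X)\in\overline{J}^{2,-}\tilde v(p)$ forces $(-\eta,-X)\in\overline{J}^{2,+}u(p)$. Next I would use that the gradient enters both $F_\varepsilon$ and $G_\varepsilon$ only through $\|\nabla_0 f\|^2$, hence is even in the gradient slot, while the infinite-Laplacian slot transforms as $-\langle(-X)(-\eta),(-\eta)\rangle=\langle X\eta,\eta\rangle$. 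Writing $a:=\varepsilon^2-\|\eta\|^2$ and $b:=\langle X\eta,\eta\rangle$, one has $F_\varepsilon(\eta,X)=\min\{-a,-b\}$ and $G_\varepsilon(-\eta,-X)=\max\{a,b\}$, and the elementary equivalences $\max\{a,b\}\ge 0\iff\min\{-a,-b\}\le 0$ and $\max\{a,b\}\le 0\iff\min\{-a,-b\}\ge 0$ convert the $G_\varepsilon$-supersolution inequality for $v$ into the $F_\varepsilon$-subsolution inequality for $\tilde u$, and the $G_\varepsilon$-subsolution inequality for $u$ into the $F_\varepsilon$-supersolution inequality for $\tilde v$.

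I do not expect any genuine obstacle here: the statement is truly a corollary, and the only place to be careful is the sign bookkeeping in the $\min$/$\max$ manipulation just described (together with reading the gradient term of $G_\varepsilon$ as $\|\nabla_0 u\|^2$, matching the one in $F_\varepsilon$). No new estimates, no fresh appeal to the Iterated Maximum Principle, and no use of the Martinet geometry beyond what already went into Theorem \ref{fthm} is required.
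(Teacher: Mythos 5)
Correct, and this is essentially the intended argument. The paper gives no proof for Corollary \ref{gcor} beyond the remark that it is ``immediate'' from Theorem \ref{fthm}; your reflection $w\mapsto -w$, together with the jet identity $J^{2,-}w(p)=-J^{2,+}(-w)(p)$ (and its closure version) and the algebraic identity $F_\varepsilon(\eta,X)=-G_\varepsilon(-\eta,-X)$, is the natural way to make that precise, and you have correctly verified that semicontinuity, the local Lipschitz hypothesis (since $\|\nabla_0(-w)\|=\|\nabla_0 w\|$), and the boundary inequality all carry over under the reflection.
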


We now state a lemma  giving an estimate on the solutions as
$\varepsilon \rightarrow 0$. 
The proof is similar to the Euclidean version proved by Jensen \cite{Je:ULE} and is omitted.  
\begin{lemma}\label{lemmaunique}
Let $g \in W^{1,\infty}(\Omega)$.
Let $ u^{\varepsilon}$ and $u_{\varepsilon}$ be solutions to 
$ F_{\varepsilon}=0$ and $G_{\varepsilon}=0$,
 respectively, equal to $g$ on $\partial \Omega$.   Given $\delta >0$, there exists an $\varepsilon > 0$ so that
\[u_{\varepsilon} \leq u^{\varepsilon} \leq u_{\varepsilon} + \delta.\]
\end{lemma}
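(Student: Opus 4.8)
The plan is to establish the two inequalities separately. The left inequality $u_\varepsilon\le u^\varepsilon$ comes directly from the comparison principle once one observes that a solution of $F_\varepsilon=0$ is automatically a \emph{supersolution} of $G_\varepsilon=0$: if $(\eta,X)\in\overline{J}^{2,-}u^\varepsilon(p)$ and $\eta_0$ is the horizontal part of $\eta$, then $F_\varepsilon(\eta_0,X)\ge 0$ forces both $\|\eta_0\|^2\ge\varepsilon^2$ and $-\ip{X\eta_0}{\eta_0}\ge 0$, so $G_\varepsilon(\eta_0,X)=\max\{\varepsilon^2-\|\eta_0\|^2,-\ip{X\eta_0}{\eta_0}\}=-\ip{X\eta_0}{\eta_0}\ge 0$. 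Since $u_\varepsilon$ is a subsolution of $G_\varepsilon=0$, since $u_\varepsilon=u^\varepsilon=g$ on $\partial\Omega$, and since the $W^{1,\infty}$ datum makes (at least one of) these solutions locally $\mathbb{M}$-Lipschitz, Corollary \ref{gcor} applies with subsolution $u_\varepsilon$ and supersolution $u^\varepsilon$ and gives $u_\varepsilon\le u^\varepsilon$ on $\overline{\Omega}$.

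For the right inequality I would first record two soft facts. The same bookkeeping with $\min$'s and $\max$'s shows that $u^{\varepsilon'}$ is a subsolution of $F_\varepsilon=0$ and $u_{\varepsilon'}$ a supersolution of $G_\varepsilon=0$ whenever $0<\varepsilon'<\varepsilon$, so Theorem \ref{fthm} and Corollary \ref{gcor} yield $u^{\varepsilon'}\le u^\varepsilon$ and $u_\varepsilon\le u_{\varepsilon'}$; thus $\varepsilon\mapsto u^\varepsilon$ is nondecreasing, $\varepsilon\mapsto u_\varepsilon$ is nonincreasing, and (using the Lipschitz bound, uniform in $\varepsilon$, together with Dini's theorem) the monotone limits $\bar u=\lim_{\varepsilon\downarrow 0}u^\varepsilon\ge\underline u=\lim_{\varepsilon\downarrow 0}u_\varepsilon$ exist uniformly on $\overline{\Omega}$, are locally Lipschitz, and equal $g$ on $\partial\Omega$. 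Moreover $\bar u$ is infinite superharmonic by stability of supersolutions, and it is also infinite subharmonic: if $\varphi\in C^2_{sub}$ touches $\bar u$ strictly from above at $p_0$ it touches $u^\varepsilon$ from above at some $p_\varepsilon\to p_0$, the $F_\varepsilon$-subsolution inequality gives $\min\{\|\nabla_0\varphi(p_\varepsilon)\|^2-\varepsilon^2,-\ip{(D^2\varphi)^\star\nabla_0\varphi}{\nabla_0\varphi}(p_\varepsilon)\}\le 0$, and letting $\varepsilon\downarrow 0$ (the first entry being eventually positive when $\nabla_0\varphi(p_0)\ne 0$, and $-\Delta_\infty\varphi(p_0)=0$ when $\nabla_0\varphi(p_0)=0$) yields $-\Delta_\infty\varphi(p_0)\le 0$; the same argument makes $\underline u$ infinite harmonic. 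Thus the content of the right inequality is precisely that these two infinite harmonic functions with datum $g$ coincide — the uniqueness statement that is not yet available and that Jensen's perturbation supplies.

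It suffices to prove $\bar u=\underline u$: given $\delta>0$, uniform convergence then lets us choose $\varepsilon$ with $\|u^\varepsilon-\bar u\|_\infty+\|\underline u-u_\varepsilon\|_\infty<\delta$, so $u^\varepsilon\le u_\varepsilon+\delta$, and with the first part we are done. To get $\bar u=\underline u$ I would run Jensen's argument \cite{Je:ULE}. Its engine is the pair of eikonal bounds carried by the approximants: testing $u^\varepsilon$ from below forces $\|\nabla_0u^\varepsilon\|\ge\varepsilon$ (so $u^\varepsilon$ admits no interior local minimum), and testing $u_\varepsilon$ from above forces $\|\nabla_0u_\varepsilon\|\ge\varepsilon$ (so $u_\varepsilon$ admits no interior local maximum). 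Sup-/inf-convolving one of the approximants at a small scale $\rho$ produces a semiconvex/semiconcave function that still satisfies the corresponding $F_\varepsilon$- or $G_\varepsilon$-inequality up to an error vanishing with $\rho$; the strict eikonal bound then promotes the one-sided infinite-Laplacian inequality at the (almost every) points of second differentiability, and feeding this back into Corollary \ref{gcor}, or directly into the maximum-principle scheme behind Lemma \ref{inflemma}, forces $\sup_{\overline{\Omega}}(u^\varepsilon-u_\varepsilon)\to 0$ as $\varepsilon\downarrow 0$.

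The one step that is not a transcription of the Euclidean argument, and the place I expect the real work to be, is that $F_\varepsilon$ and $G_\varepsilon$ are not invariant under Euclidean translations, so the convolved functions cannot be reinserted into the equation verbatim; their Euclidean jets must first be pushed through the Twisting Lemma (Lemma \ref{twlemma}). As in the computations behind Lemma \ref{5.2} and Lemma \ref{inflemma} Parts (C) and (D), this generates extra terms assembled from $f(x_1)-f(y_1)$, $f'(x_1)-f'(y_1)$, and powers of the coordinate differences; one checks that these are $O(\rho)$ (respectively, that they vanish as the Iterated Maximum Principle parameters tend to infinity) and so do not disturb the limiting inequalities. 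With this bookkeeping in place the argument of \cite{Je:ULE} carries over and delivers $\bar u=\underline u$, hence the lemma.
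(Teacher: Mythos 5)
The paper offers no proof of this lemma; it only remarks that the argument is ``similar to the Euclidean version proved by Jensen and is omitted,'' so there is no written proof to compare against. Your first two paragraphs are sound: the observation that a solution of $F_\varepsilon=0$ is automatically a supersolution of $G_\varepsilon=0$ (since $F_\varepsilon\ge 0$ forces both entries of the $\min$ to be nonnegative, which is exactly what $G_\varepsilon\ge 0$ requires) gives $u_\varepsilon\le u^\varepsilon$ from Corollary~\ref{gcor}; the monotonicity of $\varepsilon\mapsto u^\varepsilon$ and $\varepsilon\mapsto u_\varepsilon$ follows by the same one-line manipulations of the $\min$/$\max$; and the Dini reduction ``it suffices to show $\bar u=\underline u$'' is correct. (You do assert, without argument, that $g\in W^{1,\infty}$ forces $u^\varepsilon,u_\varepsilon$ to be locally $\mathbb{M}$-Lipschitz with $\varepsilon$-independent bounds; this is true in Jensen's setting via a barrier/cone-comparison estimate, but it is itself part of the work being deferred.)

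The genuine gap is in the third paragraph, where you try to prove $\bar u=\underline u$, i.e.\ the gap estimate $\sup_{\overline\Omega}(u^\varepsilon-u_\varepsilon)\to 0$. The mechanism you propose --- sup-/inf-convolve, use the eikonal bound to land on the one-sided $\Delta_\infty$-inequalities a.e., and ``feed this back into Corollary~\ref{gcor}'' or the maximum-principle machinery of Lemma~\ref{inflemma} --- produces at best a comparison of a $G_\varepsilon$-subsolution with a $G_\varepsilon$-supersolution, and every such comparison runs in the direction $u_\varepsilon\le u^\varepsilon$. It cannot yield $u^\varepsilon\le u_\varepsilon+\delta$: you would be trying to bound a supersolution from \emph{above} by a subsolution, which is precisely the direction a comparison principle does not give. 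Jensen's estimate for the right inequality is of a genuinely different nature; it is not a second application of the comparison theorem but a quantitative stability bound exploiting the eikonal constraint (his Lemmas~3.2--3.3), and that ingredient is absent from your sketch. Your fourth paragraph correctly flags the real sub-Riemannian difficulty --- sup-/inf-convolutions and their jets must be pushed through the Twisting Lemma, generating extra $f(x_1)-f(y_1)$ and $f'(x_1)-f'(y_1)$ terms to be controlled --- but without the correct Euclidean engine underneath, there is nothing to adapt. In short: the reduction is right, the left inequality is right, but the heart of the lemma (the gap estimate) is handed to a comparison argument that can only go the other way.
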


We then combine Theorem \ref{fthm}, Corollary \ref{gcor}, and
Lemma \ref{lemmaunique} to obtain the comparison principle for viscosity infinite
harmonic functions. The proof is a standard argument and omitted. \cite{B:HG}  
\begin{theorem}\label{infharmunique}
Let $u$ be a viscosity infinite subharmonic function and $v$ be a viscosity infinite superharmonic function in a domain $\Omega$ such that
 if $p \in \partial \Omega,$
$$\limsup_{q\rightarrow p}u(q) \leq \limsup_{q\rightarrow p}v(q)$$ where both
sides are not $-\infty$ or $+\infty$ simultaneously.  Then $u \leq v$ in $\Omega$.
\end{theorem}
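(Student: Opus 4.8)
The plan is to run the classical Jensen sandwiching argument (cf.\ \cite{Je:ULE,B:HG}), now powered by the Martinet comparison principles for the two auxiliary equations. After the standard reductions --- we may take $\Omega$ bounded (localizing to a bounded subdomain as in \cite{B:HG} otherwise) and, using the semicontinuity of $u,v$ together with the boundary hypothesis, we fix $g\in W^{1,\infty}(\Omega)$ whose boundary trace is squeezed between the upper semicontinuous boundary trace of $u$ and the lower semicontinuous boundary trace of $v$ --- the key preliminary observation is that infinite sub/superharmonicity is compatible with the auxiliary equations. Indeed, if $u$ is viscosity infinite subharmonic then for every $(\eta,X)\in\overline{J}^{2,+}u(p)$, writing $\eta_0$ for the horizontal part of $\eta$, we have $-\ip{X\eta_0}{\eta_0}\le 0$, so $F_\varepsilon(\eta_0,X)=\min\{\|\eta_0\|^2-\varepsilon^2,\,-\ip{X\eta_0}{\eta_0}\}\le 0$; thus $u$ is a viscosity subsolution of $F_\varepsilon=0$ for every $\varepsilon>0$. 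Symmetrically, $v$ viscosity infinite superharmonic gives $-\ip{X\eta_0}{\eta_0}\ge 0$ on $\overline{J}^{2,-}v(p)$, so $G_\varepsilon(\eta_0,X)=\max\{\varepsilon^2-\|\eta_0\|^2,\,-\ip{X\eta_0}{\eta_0}\}\ge 0$; thus $v$ is a viscosity supersolution of $G_\varepsilon=0$ for every $\varepsilon>0$.

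With this in hand, fix $\delta>0$ and, as in Lemma~\ref{lemmaunique}, let $u^\varepsilon$ and $u_\varepsilon$ be solutions of $F_\varepsilon=0$ and $G_\varepsilon=0$ in $\Omega$ that equal $g$ on $\partial\Omega$; these are locally Lipschitz by the standard regularity theory for such equations. Applying Theorem~\ref{fthm} to the pair $(u,u^\varepsilon)$ --- $u$ a subsolution, $u^\varepsilon$ a locally Lipschitz supersolution of $F_\varepsilon=0$, with the boundary trace of $u$ at most $g=u^\varepsilon$ on $\partial\Omega$ --- gives $u\le u^\varepsilon$ in $\overline\Omega$. Applying Corollary~\ref{gcor} to the pair $(u_\varepsilon,v)$ --- $u_\varepsilon$ a locally Lipschitz subsolution, $v$ a supersolution of $G_\varepsilon=0$, with $u_\varepsilon=g\le v$ on $\partial\Omega$ --- gives $u_\varepsilon\le v$ in $\overline\Omega$. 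Now choose, via Lemma~\ref{lemmaunique}, the parameter $\varepsilon>0$ so small that $u^\varepsilon\le u_\varepsilon+\delta$. Chaining the three inequalities yields $u\le u^\varepsilon\le u_\varepsilon+\delta\le v+\delta$ throughout $\Omega$; letting $\delta\to0$ gives $u\le v$ in $\Omega$.

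Essentially all of the genuine difficulty has already been spent: the jet estimates underlying Theorem~\ref{fthm}, Corollary~\ref{gcor} and Lemma~\ref{inflemma} are exactly what make comparison work in a space with neither a group law nor triangular vector fields, and the argument above is bookkeeping --- hence ``standard''. The two points that still want a little care, both dispatched precisely as in the Euclidean and Grushin-type theories \cite{Je:ULE,B:GS}, are the construction of the wedged boundary datum $g\in W^{1,\infty}(\Omega)$ together with the localization to a bounded subdomain, and the existence and local Lipschitz regularity of $u^\varepsilon$ and $u_\varepsilon$. I expect this last item to be the step a careful reader would most want spelled out; it rests on Perron's method, the comparison principles just proved, and the lower gradient bound that $F_\varepsilon$ and $G_\varepsilon$ impose.
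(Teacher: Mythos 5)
Your proposal is correct and follows exactly the route the paper indicates: the paper omits this proof, stating only that it is the standard Jensen sandwiching argument combining Theorem~\ref{fthm}, Corollary~\ref{gcor}, and Lemma~\ref{lemmaunique} as in \cite{B:HG}, and that is precisely what you carry out (observing that an infinite subharmonic function is an $F_\varepsilon$-subsolution and an infinite superharmonic function is a $G_\varepsilon$-supersolution, then chaining $u\le u^\varepsilon\le u_\varepsilon+\delta\le v+\delta$). Your final paragraph is also a fair assessment of where the residual technical content lives (localization, the wedged boundary datum $g$, and the Lipschitz regularity of $u^\varepsilon,u_\varepsilon$), all of which the paper likewise defers to \cite{B:HG}.
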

\begin{corollary}
    Let $\Omega$ be a bounded domain in $\mathbb{M}$ and let $g$ be a continuous function on $\partial \Omega$. Then the Dirichlet problem 
    \begin{eqnarray*}
\left\{ \begin{array}{cl}
\Delta_{\infty}u  =  0  & \textmd{in\ } \Omega \\
u = g & \textmd{on\ } \partial \Omega
\end{array} \right.
\end{eqnarray*}
has a unique viscosity solution $u$. 
\end{corollary}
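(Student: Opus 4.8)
The plan is to deduce uniqueness from the comparison principle for infinite harmonic functions (Theorem \ref{infharmunique}) and to deduce existence from the general sub-Riemannian existence result of \cite{CGPV}; the corollary then follows at once by combining the two. I would present it as two short arguments, \emph{uniqueness} and \emph{existence}, preceded by the remark that since $\mathcal{F}_\infty=-\Delta_\infty$, a function is a viscosity solution of $\Delta_\infty u=0$ in $\Omega$ if and only if it is simultaneously a viscosity infinite subharmonic and a viscosity infinite superharmonic function there, so the hypotheses of Theorem \ref{infharmunique} apply to any pair of such solutions.

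\textbf{Uniqueness.} Suppose $u_1$ and $u_2$ are both viscosity solutions of the Dirichlet problem. Each is continuous on $\overline{\Omega}$ with $u_1=u_2=g$ on $\partial\Omega$, and each is both infinite subharmonic and infinite superharmonic in $\Omega$. Fix $p\in\partial\Omega$: since $g$ is continuous and each $u_i$ attains its boundary datum continuously, $\limsup_{q\to p}u_1(q)=g(p)=\limsup_{q\to p}v_2(q)$ with this common value finite, so the boundary hypothesis of Theorem \ref{infharmunique} holds (in particular the two sides are never simultaneously $\pm\infty$). Applying Theorem \ref{infharmunique} with $u=u_1$, $v=u_2$ gives $u_1\le u_2$ in $\Omega$; interchanging the roles of $u_1$ and $u_2$ gives $u_2\le u_1$ in $\Omega$. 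Hence $u_1\equiv u_2$ in $\Omega$, and by continuity up to the boundary, on all of $\overline{\Omega}$.

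\textbf{Existence.} The existence theorem of \cite{CGPV}, valid in general sub-Riemannian spaces and hence in $\mathbb{M}$, produces for the continuous boundary datum $g$ a function $u$ that is infinite harmonic in $\Omega$, continuous on $\overline{\Omega}$, and equal to $g$ on $\partial\Omega$; equivalently, a viscosity solution of $\Delta_\infty u=0$ with $u=g$ on $\partial\Omega$. (Alternatively, once Theorem \ref{infharmunique} is available one may run Perron's method, the only additional ingredient being a barrier at each boundary point, which can be manufactured from cones in the Carnot-Carath\'{e}odory distance using the estimate \eqref{distest}.) Together with the uniqueness just established, this proves the corollary.

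\textbf{Main obstacle.} The comparison step itself is not the difficulty, being precisely Theorem \ref{infharmunique}; the delicate points are bookkeeping ones. One must check that the infinite harmonic function furnished by \cite{CGPV} assumes the datum $g$ in the continuous sense required by the boundary hypothesis of Theorem \ref{infharmunique}, and that the sign/normalization conventions for $\Delta_\infty$ there agree with those for $\mathcal{F}_\infty$ here. If one prefers the Perron route, the obstacle migrates to the barrier construction: because the bracket-iteration length $r_{x_1}$ in \eqref{distest} jumps at the zeros of $f'$, the cone barriers must carry the correct pointwise exponent and a little care is needed near that exceptional set. Neither issue affects the conclusion, which holds as soon as existence and the comparison principle are both in hand.
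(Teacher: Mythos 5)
Your argument is exactly the paper's intended (implicit) justification of this corollary: existence is supplied by the general sub-Riemannian result of \cite{CGPV} invoked at the start of Section~6, while uniqueness is the two-sided application of the comparison principle of Theorem~\ref{infharmunique}, with the sign identification $\mathcal{F}_\infty=-\Delta_\infty$ translating between the two formulations. The Perron alternative you sketch is a viable variant but is not the route the paper takes, and your flagged caveats (continuous attainment of the datum from \cite{CGPV}, sign conventions) are the right ones to check.
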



\begin{thebibliography}{99}

\bibitem{BR:SRG}Bella\" {\i}che, Andr\' {e}. The Tangent Space in
Sub-Riemannian Geometry. In \emph{Sub-Riemannian Geometry};
Bella\" {\i}che, Andr\' {e}., Risler, Jean-Jacques., Eds.; 
Progress in Mathematics;  Birkh\" {a}user: Basel, Switzerland.
1996; Vol. 144, 1--78.
\bibitem{B:HG}Bieske, Thomas. On Infinite Harmonic Functions on
the Heisenberg Group. Comm. in PDE. \textbf{2002}, \emph{27} (3\&4), 727--762.
\bibitem{B:GS}Bieske, Thomas. Lipschitz Extensions on generalized Grushin spaces. 
Mich. Math J. \textbf{2005}, \emph{53} (1), 3--31.
\bibitem{B:MP}Bieske, Thomas. A Sub-Riemannian Maximum Principle and its application to the p-Laplacian in Carnot Groups. Ann. Acad. Sci. Fenn. \textbf{2012}, \emph{37}, 119--134.
\bibitem{B:IX}Bieske, Thomas.The $\infty(x)$-equation in Grushin-type Spaces
Electron. J. Diff. Eqns. \textbf{2016}, \emph{2016} (125), 1--13.
\bibitem{CGPV}Capogna, L., Giovannardi, G., Pinamonti, A., Verzellesi, Simone. The asymptotic $p$-Poisson equation as $p\to\infty$ in Carnot-Carath\'{e}odory spaces. Math. Ann. \textbf{2024}, \emph{390}, 2113--2153. https://doi.org/10.1007/s00208-024-02805-z

\bibitem{CIL:UGTVS}Crandall, Michael.; Ishii, Hitoshi.; 
Lions, Pierre-Louis.  User's Guide to Viscosity Solutions of Second Order
Partial Differential Equations. Bull. of Amer. Math. Soc. \textbf{1992}, \emph{27} 
(1), 1--67.  

\bibitem{Je:ULE} Jensen, Robert.
Uniqueness of Lipschitz Extensions:  Minimizing the Sup Norm of the
Gradient. Arch. Ration. Mech. Anal. \textbf{1993}, \emph{123}, 51--74.

\end{thebibliography}
\end{document}